\date{} 
\begin{document}

\centerline{} 

\centerline{} 

\centerline {\Large{\bf Duals of Ann-categories}} 

%\centerline{} 

%\centerline{\Large{\bf TITLE SECOND LINE}} 

\centerline{} 

\centerline{\bf {Nguyen Tien Quang and Dang Dinh Hanh}} 

\centerline{} 

\centerline{Department of Mathematics, Hanoi National University of Education} 

\centerline{ 136 Xuan Thuy Street, Hanoi, Vietnam} 

\centerline{nguyenquang272002@gmail.com  \ \  \ \ \   ddhanhdhsphn@gmail.com} 
%\centerline{Address of Author2 forth line} 

\newtheorem{Theorem}{\quad Theorem}[section] 

\newtheorem{Proposition}[Theorem]{\quad Proposition}%[section] 

\newtheorem{Definition}[Theorem]{\quad Definition} 

\newtheorem{Corollary}[Theorem]{\quad Corollary} 

\newtheorem{Lemma}[Theorem]{\quad Lemma} 

\newtheorem{Example}[Theorem]{\quad Example} 

\begin{abstract} 
Dual  monoidal category $\mathcal C^\ast$ of a  monoidal functor $F:\mathcal C\rightarrow \mathcal V$ has been constructed by S. Majid. In this paper, we extend the construction of dual structures for an Ann-functor $F:\mathcal B\rightarrow \mathcal A$. In particular, when $F=id_{\mathcal A}$, then the dual  category $\mathcal A^{\ast}$ is indeed the center of $\mathcal A$ and this is a braided Ann-category.
\end{abstract} 

{\bf Mathematics Subject Classification:} 18D10, 16D20 \\ 

{\bf Keywords:} duals of Ann-categoies, braided Ann-category, functored, bimodules

\section{Introduction}
Categories with quasi-symmetry appeared under the heading ``braided monoidal categories" in a connection with low dimensional topology \cite{JS1},  as well as in the context of quantum groups \cite{Kas}.

The concept ``{\it dual of monoidal category}" appeared in \cite{Maj} in the following case. The Hopf algebra can be built via a monoidal category $\mathcal C$ and a functor $F: \mathcal C\rightarrow {\bf Vec}$. This event can be generalized as ${\bf Vec}$ is replaced by a monoidal category $\mathcal V$. Now, if $F$ is a monoidal functor, then $\mathcal C$ is called {\it functored} on $\mathcal V$, or $(\mathcal C,F)$ is called a {\it $\mathcal V$-category} in A. Grothendieck's terminology \cite{Gro}. In this situation, S. Majid built the monoidal category $(\mathcal C, F)^\ast = (\mathcal C^\ast,  F^\ast)$, named ``{\it full dual category}" of $ (\mathcal C, F)$. The objects of $(\mathcal C, F)^\ast$  are pairs $(V, u_V)$, consisting of $V\in\mathcal C$ and  a natural transformation $u_V = (u_{V,X}:V\otimes FX\rightarrow FX\otimes V)$ satisfying the compatition with the monoidal functor $(F, \widetilde{F})$. The full subcategory $(\mathcal C, F)^\circ$ consists of objects  $(V, u_V)$ where $u_{V,X}$ are isomorphisms. It is interesting when   $\mathcal V = \mathcal C$ and $F = id$, then  $(\mathcal C, F)^\circ$ is a braided monoidal category, called  the {\it center}  $Z(\mathcal C)$ of the monoidal category $\mathcal C$.

The notion of the center of a monoidal category appeared first in  \cite{JS1}, \cite{Maj}. It was a construction of a braided tensor category from an arbitrary tensor category.  Then, the center of a category appears as a tool to study categorical groups \cite{Car-Gar} and graded categorical groups \cite{Gar-Del}.

The detail proofs of the construction of $(\mathcal C, F)^\ast$ have showed in \cite{Maj2}. Concurrently, in \cite{Maj2}, S. Majid enriched the results of the dual categories and established links between dual categories and braided groups.

Monoidal categories were considered in a more general situation due to M. Laplaza with the name {\it distributivity category} \cite{Lap}. After, A. Fr\"{o}hllich and C. T. C.  Wall \cite{Fro-Wa} presented  the concept of {\it ring-like category}. These two concepts  are categorifications of the concept of commutative rings, as well as a generalization of the category of modules over a commutative ring $R$. The overlap of these two concepts has been proved in \cite{Q4}.

In order to have descriptions of structures, and a cohomological classification, N. T. Quang \cite{Q1} has introduced  the concept of {\it Ann-categories}, as a categorification of the concept of rings,  with requirements of invertibility of objects and morphisms of the under-lying category, similar to those of categorical groups (see \cite{Car-Gar}, \cite{Gar-Del}). In \cite{Q3}, N. T. Quang proved that  each congruence class of an Ann-category $\mathcal A$ is completely defined by three invariants: the ring $\Pi_0(\mathcal A)$ of congruence classes of objects of $\mathcal A,$ the $\Pi_0(\mathcal A)$-bimodule $\Pi_1(\mathcal A)$ of automorphisms of {\it zero} object, and an element in the cohomology group  $H^{3}_{MacL}(R, M)$ due to Mac Lane \cite{Mac}. The concept of {\it braided Ann-categories} is considered in \cite{Q4}, in which authors built the {\it center} of an Ann-category, an extension of the center construction of a monoidal category presented by A. Joyal and R. Street \cite{JS1}. This motivation leads to the purpose of this paper is to construct a dual Ann-category of an arbitrary Ann-category (in Section 3). This gives us a new framework of the concept of Ann-categories, which is very close to the ring extension problem. We also note that the center of an Ann-category is a dual over $\mathcal A$. Thus, in the duals over $\mathcal A$ there always exist braided Ann-categories.

In this paper, we sometimes denote by  $XY$ the tensor product of two objects  $X, Y$ instead of  $X\otimes Y$.

%%%%%%%%%%%%%%%%%%%%%%%%%%%%%%%%%%%%%%%%%%%%%%%%%%%%%%%%%%%%%%%%%%%%%%%%%%%%%%%%%%%%%%%%%%%%

\section {Some basic definitions}

\begin{Definition}[\cite{Q1}]
An \emph{Ann-category} consists of:\\
(i)  Category $\mathcal A$ together with two bifunctors $\oplus , \otimes : \mathcal A \times \mathcal A \rightarrow \mathcal A$.\\
(ii) A fixed object $O\in \mathcal A$ together with naturality constraints $a^+,c^+,g,d$ such that $(\mathcal A,\oplus,a^+,c^+,(O,g,d))$ is a symmetric categorical group.\\
(iii) A fixed object $I\in\mathcal A$ together with naturality constraints $a,l,r$ such that $(\mathcal A,\otimes,a,(I,l,r))$ is a monoidal $A$-category.\\
(iv) Natural isomorphisms $\frak L, \frak R$
\begin{equation*}
\begin{array}{cccc}
\frak L_{A, X, Y} : &A\otimes (X\oplus Y) &\rightarrow& (A\otimes X)\oplus (A\otimes Y),\\
\frak R_{X, Y, A}:&(X\oplus Y)\otimes A &\rightarrow &(X\otimes A)\oplus (Y\otimes A),
\end{array}
\end{equation*}
such that the following conditions are satisfied:\\
(Ann-1) For each $A\in \mathcal A,$ the pairs $(L^A,\breve{L^A}),(R^A,\breve{R^A})$ defined by relations:
\begin{equation*}
\begin{array}{cccc}
L^A = A \otimes -,&&R^A = - \otimes A,\\
\Breve L_{X, Y}^A = \frak L_{A, X, Y},&&\Breve{R}_{X, Y}^A = \frak R_{X, Y, A}
\end{array}
\end{equation*}
are $\oplus$-functors which are compatible with $a^+$ and $c^+$.\\
(Ann-2) The following diagrams commute for   all  objects $ A,B,X,Y\in \mathcal A$:
\[\scriptsize\begin{diagram}
\node{(AB)(X\oplus Y)}\arrow{s,l}{\Breve L^{AB}}
\node{A(B(X\oplus Y))}\arrow{e,t}{id_A \otimes \Breve L^B}\arrow{w,t}{\ a_{A, B, X\oplus Y}}
\node{A(BX\oplus BY)}\arrow{s,r}{\Breve L^A}\\
\node{(AB)X\oplus (AB)Y}
\node[2]{A(BX)\oplus A(BY)}\arrow[2]{w,t}{a_{A, B, X}\oplus a_{A, B, Y}}
\end{diagram}\]
%****************************
\[\scriptsize\begin{diagram}
\node{(X\oplus Y)(BA)}\arrow{s,l}{\Breve{R}^{BA}}\arrow{e,t}{a_{X\oplus Y, B, A}}
\node{((X\oplus Y)B)A}\arrow{e,t}{\Breve{R}^B \otimes id_A}
\node{(XB\oplus YB)A}\arrow{s,r}{\Breve{R}^A}\\
\node{X(BA)\oplus Y(BA)}\arrow[2]{e,t}{a_{X, B, A}\oplus a_{Y, B, A}}
\node[2]{(XB)A\oplus (YB)A}
\end{diagram}\]
%*****************************
\[\scriptsize\begin{diagram}
\node{(A(X\oplus Y))B}\arrow{s,l}{\Breve L^A \otimes id_B}
\node{A((X\oplus Y)B)}\arrow{w,t}{a_{A, X\oplus Y, B}}\arrow{e,t}{id_A \otimes \Breve{R}^B}
\node{A(XB\oplus YB)}\arrow{s,r}{\Breve L^A}\\
\node{(AX\oplus AY)B}\arrow{e,t}{\Breve{R}^B}
\node{(AX)B\oplus (AY)B}
\node{A(XB)\oplus A(YB)}\arrow{w,t}{a\  \oplus \ a}
\end{diagram}\]
%*******************

\[\scriptsize
\begin{diagram}
\node{(A\oplus B)X\oplus (A\oplus B)Y}\arrow{s,l}{\Breve{R}^X \oplus \Breve{R}^Y}
\node{(A\oplus B)(X\oplus Y)}\arrow{w,t}{\Breve L}\arrow{e,t}{\Breve{R}}
\node{A(X\oplus Y)\oplus B(X\oplus Y)}\arrow{s,r}{\Breve L^A \oplus \Breve L^B}\\
\node{(AX\oplus BX)\oplus (AY\oplus BY)}\arrow[2]{e,t}{\text{v}}
\node[2]{(AX\oplus AY)\oplus (BX\oplus BY)}
\end{diagram}\]
where $ v = v_{_{U, V, Z, T}}:  (U\oplus V)\oplus (Z\oplus T) \rightarrow (U\oplus Z)\oplus (V\oplus T)$  
is the unique morphism built from $a^+,c^+,id$ in the symmetric monoidal category $(\mathcal A,\oplus).$

\noindent (Ann-3) For the unit object $I\in \mathcal A$ of the operation $\otimes,$ we have the following relations for all objects $X,Y\in \mathcal A$:

\begin{equation*}
l_{X\oplus Y}=(l_X\oplus l_Y)\circ \Breve L^I_{X,Y},\quad r_{X\oplus Y}=(r_X\oplus r_Y)\circ \Breve R^I_{X,Y}.
\end{equation*}
\end{Definition}

\begin{Definition} Let $\mathcal A$ and $\mathcal  A'$ be Ann-categories. An \emph{Ann-functor} from  
$\mathcal A$ to $\mathcal A'$ is a triple $(F, \Breve{F}, \widetilde{F})$, where $(F, \Breve{F})$ is a symmetric monoidal functor respect to the  operation $\oplus$, $(F, \widetilde{F})$ is an $A$-functor (i.e. an associativity functor) respect to the operation  $\otimes$, satisfying  the two following commutative diagrams for all $X, Y, Z \in Ob(\mathcal A)$:

\[\scriptsize\begin{diagram}
\node{F(X(Y\oplus Z))}\arrow{s,l}{F(\frak{L} )}
\node{FX.F(Y\oplus Z)}\arrow{w,t}{\widetilde{F}}
\node{FX(FY\oplus FZ)}\arrow{w,t}{id\otimes \Breve{F}}\arrow{s,r}{\frak{L} '}\\
\node{F(XY\oplus XZ)}
\node{F(XY)\oplus F(XZ)}\arrow{w,t}{\Breve{F}}
\node{FX.FY\oplus FX.FZ}\arrow{w,t}{\widetilde{F} \oplus \widetilde{F}}
\end{diagram}\]
%**********************
\[\scriptsize\begin{diagram}
\node{F((X\oplus Y)Z)}\arrow{s,l}{F(\frak{R} )}
\node{F(X\oplus Y).FZ}\arrow{w,t}{\widetilde{F}} 
\node{(FX\oplus FY).FZ}\arrow{w,t}{\Breve{F} \otimes id}\arrow{s,r}{\frak{R} '}\\
\node{F(XZ\oplus YZ)}
\node{F(XZ)\oplus F(YZ)}\arrow{w,t}{\Breve{F}}
\node{FX.FZ\oplus FY.FZ}\arrow{w,t}{\widetilde{F} \oplus \widetilde{F}}
\end{diagram}\]
\end{Definition}

 \begin{Definition} A \emph{braided Ann-category}  $\mathcal A$ is an Ann-category $\mathcal A$ together with a  braid $c$ such that $(\mathcal A, \otimes, a, c, (I,l,r))$ is a braided tensor category, concurrently $c$ satisfies the following relation: 
 \begin{equation*}
(c_{A,X}\oplus c_{A,Y})\circ \Breve L^A_{X,Y}=\Breve R^A_{X,Y}\circ c_{A,X\oplus Y},
\end{equation*}
and the condition  $c_{_{O, O}}=id$.
\end{Definition}

Let us recall a result which has been known of an Ann-category.

\begin{Proposition}[{\cite[Proposition 3.1]{Q1}}]
In the Ann-category $\mathcal A$, there exist uniquely the isomorphisms:
$$\hat{L}^A:A\otimes O\rightarrow A,\qquad \qquad \hat{R}^A: O\otimes A\rightarrow A$$
\noindent such that $(L^A, \Breve{L}^A,\hat{L}^A),(R^A, \Breve{R}^A, \hat{R}^A)$ are the functors which are compatible with the unit constraints of the operator $\oplus$ (also called $U$-functors).
\end{Proposition}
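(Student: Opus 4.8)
The plan is to recognize the statement as a special case of a general fact about symmetric categorical groups: a $\oplus$-functor that is compatible with the associativity and commutativity constraints carries the unit object to the unit object in a unique coherent way, i.e.\ extends uniquely to a $U$-functor. By (Ann-1) the pairs $(L^A,\Breve L^A)$ and $(R^A,\Breve R^A)$ are $\oplus$-functors on the symmetric categorical group $(\mathcal A,\oplus,a^+,c^+,(O,g,d))$ that are already compatible with $a^+$ and $c^+$; so the whole task reduces to producing the isomorphisms $\hat L^A,\hat R^A$ with codomain the unit object $O$ and checking the two unit-coherence conditions. Since $L^A$ and $R^A$ are treated symmetrically, I would run the argument for $L^A$ and then transcribe it for $R^A$.

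First I would show that $L^A(O)=A\otimes O$ is a $\oplus$-idempotent. Evaluating the natural isomorphism $\Breve L^A_{X,Y}=\frak L_{A,X,Y}$ at $X=Y=O$ and composing with $id_A\otimes g_O$ yields an isomorphism
\[
\varphi\colon (A\otimes O)\oplus(A\otimes O)\xrightarrow{\ (\Breve L^A_{O,O})^{-1}\ } A\otimes(O\oplus O)\xrightarrow{\ id_A\otimes g_O\ } A\otimes O.
\]
Writing $Y=A\otimes O$ and choosing an inverse $Y^{\ast}$ with $Y^{\ast}\oplus Y\cong O$, the isomorphism $Y\oplus Y\cong Y$ gives $Y\cong Y^{\ast}\oplus(Y\oplus Y)\cong Y^{\ast}\oplus Y\cong O$ after reassociating by $a^+$ and cancelling by $g$. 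This produces $\hat L^A\colon A\otimes O\to O$; I would define it canonically as the unique morphism making the left-unit condition hold, and read off its existence from $\varphi$.

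Next I would verify the two $U$-functor axioms, that $\hat L^A$ is compatible with the left and right unit constraints $g$ and $d$ of $\oplus$. Each is a diagram chase combining naturality of $\frak L$, the triangle axiom for $(O,g,d)$, and the compatibility of $(L^A,\Breve L^A)$ with $a^+$ guaranteed by (Ann-1); the symmetry $c^+$ enters precisely to deduce the $d$-condition from the $g$-condition, which is the reason (Ann-1) also demands compatibility with $c^+$. Uniqueness then follows because two candidates $A\otimes O\to O$ satisfying the left-unit condition differ by an automorphism of $O$ that the same condition forces to be $id_O$.

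I expect the principal obstacle to lie in the second step rather than in the existence argument: the cancellation only yields an abstract isomorphism $A\otimes O\cong O$, so the genuine content is to single out the one isomorphism compatible simultaneously with $g$ and $d$ and to check this without circular use of the very conditions being established. Here I would invoke Mac Lane coherence for the symmetric monoidal structure $(\mathcal A,\oplus)$, so that any two morphisms built from $a^+,c^+,g,d,id$ between the same source and target agree; this collapses the coherence diagrams to identities and makes the verification routine.
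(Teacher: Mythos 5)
The paper itself contains no proof for you to match: this proposition is recalled as a known result from \cite{Q1} (Proposition 3.1 there). Measured against the standard argument for that fact in the theory of categorical groups---which is, in essence, what the cited source does---your plan is the right one, and you have also correctly repaired what must be a typo in the statement: the codomains have to be $O$, not $A$ (the proposition categorifies $a\cdot 0=0=0\cdot a$; moreover the later use of $u_{O,X}=\hat{L}^{-1}_{FX}$ as the unit constraint of $(\mathcal B^{\ast},+)$ in Proposition \ref{thm2} only type-checks for $\hat{L}^{FX}:FX\otimes O\to O$).

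Two points to tighten. First, the engine of both halves is that $-\oplus Y$ is an equivalence for every object $Y$ of the symmetric categorical group $(\mathcal A,\oplus)$: faithfulness gives uniqueness exactly as you say, and fullness gives existence directly. Concretely, $\hat{L}^A$ should be defined as the unique morphism $A\otimes O\to O$ satisfying $\hat{L}^A\oplus id_{A\otimes O}=g^{-1}_{A\otimes O}\circ (id_A\otimes g_O)\circ (\Breve{L}^A_{O,O})^{-1}$, which is the $g$-compatibility condition at $X=O$; one then shows, using naturality of $\frak{L}$, the compatibility of $(L^A,\Breve{L}^A)$ with $a^+$ from (Ann-1), and the axioms of $(O,g,d)$, that this condition propagates to all $X$, and via compatibility with $c^+$ (as you observe) to the $d$-condition. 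Since $-\oplus(A\otimes O)$ reflects isomorphisms, this $\hat{L}^A$ is automatically invertible, so your idempotent computation $Y\oplus Y\cong Y\Rightarrow Y\cong O$, while correct, produces only an uncontrolled isomorphism and can be deleted without loss. Second, your appeal to coherence is overstated: in a symmetric monoidal category it is \emph{not} true that any two morphisms built from $a^+,c^+,g,d,id$ with the same source and target coincide (e.g. $c^+_{X,X}\neq id_{X\oplus X}$ in general, as in the categories of type $(R,M)$ of Example 2, where $c^+_{x,x}=(\bullet,\eta(x,x))$); coherence collapses only pairs of composites inducing the same permutation, and the diagrams you must check also involve $\Breve{L}^A$, which lies outside the $\oplus$-structure, so there (Ann-1) must be invoked explicitly. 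With these repairs your argument is complete and agrees with the standard proof of the cited result.
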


%%%%%%%%%%%%%%%%%%%%%%%%%%%%%%%%%%%%%%%%%%%%%%%%%%%%%%%%%%%%%%%%%%%%%%%%%%%%%%%%%%%%%%%%%%%%%

\section{Duals of Ann-categories}

In this section, we shall build {\it duals of Ann-categories} based on the construction of duals of monoidal categories by S. Majid \cite{Maj}.

Let $\mathcal A$ be an Ann-category. An Ann-category $\mathcal B$ is {\it functored} over $\mathcal A$ if there is an Ann-functor $F:\mathcal B\rightarrow \mathcal A$.

First, let us recall that an Ann-category is called {\it almost strict} if all its natural constraints, except for the commutativity constraint and the left distributivity constraint, are identities. Each Ann-category is Ann-equivalent to an almost strict Ann-category of the type $(R,M)$ (see \cite{Q2}).  In this category, for each  $A\in Ob(\mathcal A)$, there exists an object  $A'\in Ob(\mathcal A)$ such that 
\begin{equation}\label{bd7}
A\oplus A'=O.\tag{1}
\end{equation}

So, hereafter, we always assume that $\mathcal A$ is an almost strict Ann-category and satisfies the condition (\ref{bd7}) and the Ann-functor $F:\mathcal B\rightarrow \mathcal A$ satisfies the conditions $F(O)=O, F(I)=I$.

\begin{Definition}
Let $\mathcal A$ be an Ann-category. Let $(\mathcal B, F)$ be a functored Ann-category over $\mathcal A$. A right $(\mathcal B, F)$-module is a pair $(A, u_A)$ consisting of an object $A$ in $\mathcal A$ and a natural transformation $u_{A,X}: A\otimes F(X)\rightarrow F(X)\otimes A$ such that  $u_{A,I}=id$  and the following diagrams commute:

\begin{equation}\label{bd9}
\scriptsize\begin{diagram}
\node{A\otimes (FX\oplus   FY)}\arrow{e,t}{\Breve L^A_{FX,FY}}\arrow{s,l}{id\otimes \Breve{F}}
\node{(A\otimes   FX)\oplus   (A\otimes   FY)}\arrow{e,t}{u_{A,X}\oplus   u_{A,Y}}
\node{(FX\otimes   A)\oplus  (FY\otimes   A)}\arrow{s,r}{id}\\
\node{A\otimes   F(X\oplus   Y)}\arrow{e,t}{u_{A,X\oplus   Y}}
\node{F(X\oplus   Y)\otimes   A}
\node{(FX\oplus   FY)\otimes   A}\arrow{w,t}{\Breve{F}\otimes   id}
\end{diagram}\tag{2}
\end{equation}

\begin{equation}\label{bd10}
\scriptsize\begin{diagram}
\node{A\otimes  (FX\otimes   FY)}\arrow{e,t}{u_{A,X}\otimes   id}\arrow{s,l}{id\otimes  \widetilde{F}}
\node{FX\otimes   A\otimes   FY}\arrow{e,t}{id\otimes   u_{A,Y}}
\node{FX\otimes   FY\otimes   A}\arrow{s,r}{\widetilde{F}\otimes   id}\\
\node{A\otimes   F(X\otimes   Y)}\arrow[2]{e,t}{u_{A,X\otimes   Y}}
\node[2]{F(X\otimes   Y)\otimes   A}
\end{diagram}\tag{3}
\end{equation}

\noindent A morphism $f: (A,u_{A})\rightarrow (B,u_{B})$ between right $(\mathcal B,F)$-modules is a morphism $f: A\rightarrow B$ in $\mathcal A$ such that the following diagram commutes for all $X\in \mathcal B$:
\begin{equation}\label{bd11}
\scriptsize\begin{diagram}
\node{A\otimes   FX}\arrow{e,t}{u_{A,X}}\arrow{s,l}{f\otimes   id}
\node{FX\otimes   A}\arrow{s,r}{id\otimes   f}\\
\node{B\otimes   FX}\arrow{e,t}{u_{B,X}}
\node{FX\otimes   B}
\end{diagram}
\tag{4}
\end{equation}
\end{Definition}

Let $(\mathcal B, F)$ be a  functored Ann-category over $\mathcal A.$ We consider the category $\mathcal B^{\ast}=(\mathcal B, F)^{\ast}$ defined as follows. The objects of $\mathcal B^{\ast}$ are right $(\mathcal B,F)$-modules. The morphisms of $\mathcal B^{\ast}$ are morphisms between right $(\mathcal B,F)$-modules.

Now, we shall equip the operators and the structures for $\mathcal B^{\ast}$ so that
$\mathcal B^{\ast}$ becomes an Ann-category.

\begin{Lemma}\label{thm1}
For any two objects $(A,u_{A}), (B,u_{B})$ in $\mathcal B^{\ast}$,  $(A\oplus B, u_{A\oplus B})$ is an object of $\mathcal B^{\ast}$, where $u_{A\oplus B}$ is defined by:
\begin{equation*}
u_{A\oplus B,X}=\frak L^{-1}_{FX,A,B}\circ (u_{A,X}\oplus u_{B,X}),\ \emph{for all}\  X\in \mathcal A.
\end{equation*}
\end{Lemma}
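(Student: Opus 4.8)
The plan is to verify the three defining conditions of a right $(\mathcal{B},F)$-module for the candidate pair $(A\oplus B,u_{A\oplus B})$: that $u_{A\oplus B}$ is natural in $X$ with $u_{A\oplus B,I}=id$, that the additive square (2) commutes, and that the multiplicative square (3) commutes. First I would record the consequences of almost strictness: the right distributivity constraint $\mathfrak{R}$, the associativity constraint $a$, and all unit constraints are identities, so only $\mathfrak{L}$ and the commutativity constraint $c^+$ survive. In particular $(A\oplus B)\otimes FX=(A\otimes FX)\oplus(B\otimes FX)$ on the nose, so the composite $u_{A\oplus B,X}=\mathfrak{L}^{-1}_{FX,A,B}\circ(u_{A,X}\oplus u_{B,X})$ really does have source $(A\oplus B)\otimes FX$ and target $FX\otimes(A\oplus B)$; these suppressed identities will also shorten the chases below.

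Naturality in $X$ is immediate, since $u_{A,X}\oplus u_{B,X}$ is natural ($u_A,u_B$ are natural and $\oplus$ is a bifunctor) and $\mathfrak{L}^{-1}$ is a natural isomorphism. For the unit condition, putting $X=I$ and using $F(I)=I$ and $u_{A,I}=u_{B,I}=id$ gives $u_{A\oplus B,I}=\mathfrak{L}^{-1}_{I,A,B}$; relation (Ann-3), namely $l_{A\oplus B}=(l_A\oplus l_B)\circ\Breve{L}^I_{A,B}$, together with $l=id$ forces $\mathfrak{L}_{I,A,B}=\Breve{L}^I_{A,B}=id$, whence $u_{A\oplus B,I}=id$.

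For the additive square (2) I would substitute the definition of $u_{A\oplus B}$ into both legs and paste the squares (2) for $u_A$ and for $u_B$, joined by $\oplus$. The remaining gap is a statement purely about distributivity isomorphisms, closed by the fourth diagram of (Ann-2) --- which mediates between $\Breve{L}^{A\oplus B}$, $\Breve{R}$ and $\Breve{L}^A\oplus\Breve{L}^B$ through the shuffle morphism $v$ (with $\Breve{R}=id$ here) --- together with the naturality of $\mathfrak{L}$ in its first argument, used to slide the structure map $\Breve{F}$ past the distributivity maps relating $\mathfrak{L}_{F(X\oplus Y),A,B}$ to $\mathfrak{L}_{FX\oplus FY,A,B}$. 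For the multiplicative square (3) I would proceed the same way, this time meeting $\mathfrak{L}_{FX\otimes FY,A,B}$ after transporting along $\widetilde{F}$; the key input is the first diagram of (Ann-2), applied with the module objects $A,B$ in the last two tensor slots, which expresses $\mathfrak{L}_{FX\otimes FY,A,B}$ through $\mathfrak{L}_{FY,A,B}$ and $\mathfrak{L}_{FX,A,B}$ mediated by $a$ (trivial here), pasted against the two instances of (3) for $u_A,u_B$.

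The main obstacle I expect is the multiplicative chase. There one must interleave $u_{A\oplus B,X}\otimes id$ and $id\otimes u_{A\oplus B,Y}$, each hiding an $\mathfrak{L}^{-1}$ on a different tensor factor, and check that the two nested copies of $\mathfrak{L}^{-1}$ assemble --- via the (Ann-2) associativity--distributivity coherence and the naturality of $\mathfrak{L}$ in its first variable --- into the single $\mathfrak{L}^{-1}_{F(X\otimes Y),A,B}$ appearing in $u_{A\oplus B,X\otimes Y}$. Keeping track of which $\mathfrak{L}$ sits on which factor, and confirming that almost strictness legitimately collapses the associativity constraints that would otherwise block the pasting, is where the care is needed; by contrast the additive square becomes routine once the fourth (Ann-2) diagram is invoked.
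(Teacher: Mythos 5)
Your proposal takes essentially the same route as the paper: the paper's proof is exactly the pasting argument you outline, with diagram (5) closing the additive square (2) by combining the definition of $u_{A\oplus B}$, the fourth (Ann-2) axiom with the shuffle $v$, naturality of $\mathfrak{L}$ and of $v$, triviality of $\mathfrak{R}$, and the hypothesis that $u_A,u_B$ satisfy (2), and with diagram (6) closing the multiplicative square (3) by the first (Ann-2) axiom, the mixed $\mathfrak{L}$--$\mathfrak{R}$ axiom (the third of (Ann-2), which your ``interleaving'' step tacitly needs in order to identify $\mathfrak{L}^{-1}_{FX,A,B}\otimes id_{FY}$ with $\mathfrak{L}^{-1}_{FX,A\otimes FY,B\otimes FY}$), naturality of $\mathfrak{L}$, and the module axioms for $u_A,u_B$. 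Your derivation of $u_{A\oplus B,I}=id$ from (Ann-3) also matches the paper, which simply asserts $\mathfrak{L}_{I,A,B}=id$ there.
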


\begin{proof}
Since  $u_{A,I}=id,\  u_{B,I}=id,\ \frak L_{FI,A,B}=\frak L_{I,A,B}=id$, we have  $u_{A\oplus B,I}=id$.

To prove that $u_{A\oplus   B}$ satisfies the diagram (\ref{bd9}),  we consider the diagram (5) (see page 12). In the diagram (5), the regions (I), (VII) commute thanks to the determination of  $u_{A\oplus   B}$, the region (II) commutes thanks to the naturality of $\frak{R}=id$, the regions (III), (VI) commute since $\mathcal A$ is an  Ann-category, the region  (V) commutes thanks to the naturality of $\frak{L}$, the region (VIII) commutes thanks to the naturality of  $v$, the perimeter commutes since $(A,u_{A}), (B,u_{B})$ satisfy the diagram (\ref{bd9}). Therefore, the region (IV) commutes, i.e.,  $(A\oplus   B, u_{A\oplus   B})$ satisfies the diagram $(\ref{bd9})$.

To prove that  $u_{A\oplus   B}$ satisfies the diagram (\ref{bd10}), we consider the diagram (6) (see page 13). In the diagram (6), the regions (I), (II) commute thanks to the naturality of $\frak{R}=id$, the regions (III), (VI), (VIII) commute thanks to the determination of $u_{A\oplus   B}$, the regions (IV), (X) commute since $\mathcal A$ is an  Ann-category, the regions  (VII), (IX) commute thanks to the naturality of  $\frak{L}$, the perimeter commutes thanks to  $u_{A}, u_{B}$ satisfy the diagram (\ref{bd10}). Therefore, the region (V) commutes, i.e., $u_{A\oplus   B}$ satisfies the diagram (\ref{bd10}).
So, $(A\oplus   B, u_{A\oplus   B})$ is an object of $\mathcal B^{\ast}$.
\end{proof}
By Lemma \ref{thm1}, we can determine  the operator ``+" of $\mathcal B^{\ast}$ 
where the sum of two objects is defined by
\begin{equation*}
 (A,u_{A})+(B,u_{B})=(A\oplus B, u_{A\oplus B}),
\end{equation*}
and the sum of two morphisms  is the sum of morphisms in $\mathcal A$.

\begin{Proposition}\label{thm2}
$\mathcal B^{\ast}$ is a symmetric categorical group where
the associativity constraint  is strict, the unit constraint  is \  $((O, u_{O,X}=\hat{L}^{-1}_{FX}),id,id)$, and the commutativity constraint is \  $c^+_{(A,u_A),(B,u_B)}=c^+_{A,B}$.
\end{Proposition}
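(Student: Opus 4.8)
The plan is to obtain the symmetric categorical group structure on $\mathcal B^{\ast}$ by transporting that of $(\mathcal A,\oplus,a^+,c^+,(O,g,d))$ along the forgetful functor $U:\mathcal B^{\ast}\to\mathcal A$, $(A,u_A)\mapsto A$, $f\mapsto f$. This functor is faithful---indeed it is the identity on underlying morphisms---and it is strict for the proposed data, carrying $+$ to $\oplus$, the pair $(O,u_O)$ to $O$, and the constraints $a^+_{\mathcal B^{\ast}}=id$, $g=d=id$, $c^+_{(A,u_A),(B,u_B)}=c^+_{A,B}$ to the constraints of $\mathcal A$. Hence every coherence diagram required of $\mathcal B^{\ast}$ (the pentagon, the triangle, the two hexagons, and the symmetry identity $c^+\circ c^+=id$) is sent by $U$ to the corresponding diagram in $\mathcal A$, which commutes because $(\mathcal A,\oplus)$ is a symmetric categorical group; faithfulness of $U$ then forces commutativity upstairs. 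So no coherence axiom has to be checked by hand once we know the proposed data genuinely lie in $\mathcal B^{\ast}$.

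Thus the real content is that $+$ is a bifunctor and that each structure constraint is an honest morphism of right $(\mathcal B,F)$-modules, i.e. satisfies diagram (\ref{bd11}). First I would check that the sum $f\oplus f'$ of two $\mathcal B^{\ast}$-morphisms again satisfies (\ref{bd11}), making $+$ a bifunctor, and that the unit pair $(O,u_O)$, whose structure map is built from $\hat L$ and $\hat R$, is a module by verifying (\ref{bd9}) and (\ref{bd10}) for it. For the associativity I would expand $u_{(A\oplus B)\oplus C}$ and $u_{A\oplus(B\oplus C)}$ via the formula of Lemma \ref{thm1} and, using functoriality of $\oplus$, reduce the desired equality $u_{(A\oplus B)\oplus C}=u_{A\oplus(B\oplus C)}$ to the coherence of $\mathfrak{L}$ expressing compatibility of $(L^{FX},\breve L^{FX})$ with $a^+$, namely $(\mathfrak{L}_{FX,A,B}\oplus id)\circ\mathfrak{L}_{FX,A\oplus B,C}=(id\oplus\mathfrak{L}_{FX,B,C})\circ\mathfrak{L}_{FX,A,B\oplus C}$, together with the strictness of $a^+$; this shows $a^+_{\mathcal B^{\ast}}=id$ is a morphism. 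Similarly, $g=d=id$ being morphisms reduces to $u_{O\oplus A}=u_A=u_{A\oplus O}$, which follows from the $U$-functor compatibility of $(L^{FX},\breve L^{FX})$ with the unit constraints of $\oplus$.

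For the commutativity I would verify that $c^+_{A,B}$ is a morphism $(A\oplus B,u_{A\oplus B})\to(B\oplus A,u_{B\oplus A})$. Rewriting $(id_{FX}\otimes c^+_{A,B})\circ u_{A\oplus B,X}$ successively by the compatibility of $(L^{FX},\breve L^{FX})$ with $c^+$, then by the naturality of $c^+$ applied to $u_{A,X}\oplus u_{B,X}$, and finally by the compatibility of $(R^{FX},\breve R^{FX})$ with $c^+$---using $\mathfrak{R}=id$ to identify $c^+_{A\otimes FX,\,B\otimes FX}$ with $c^+_{A,B}\otimes id_{FX}$---I expect to land exactly on $u_{B\oplus A,X}\circ(c^+_{A,B}\otimes id_{FX})$, which is diagram (\ref{bd11}) for $c^+_{A,B}$.

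It remains to confirm the categorical-group conditions. Every morphism of $\mathcal B^{\ast}$ is invertible because it is a morphism of $\mathcal A$, whose underlying category is a groupoid, and its $\mathcal A$-inverse again satisfies (\ref{bd11}) (invert the square); in particular every $u_{A,X}$ is already an isomorphism. For object invertibility I would take, for a given $(A,u_A)$, the $\oplus$-inverse $A'$ provided by condition (\ref{bd7}) and equip it with the module structure $u_{A'}$ determined by the requirement $(A,u_A)+(A',u_{A'})=(O,u_O)$, then check that this $u_{A'}$ indeed satisfies (\ref{bd9}) and (\ref{bd10}). I expect the main obstacle to be precisely this last step together with the commutativity check: extracting a well-defined $u_{A'}$ from the identity $u_{A,X}\oplus u_{A',X}=\mathfrak{L}_{FX,A,A'}\circ u_{O,X}$ and confirming its module axioms, and threading the $c^+$-compatibility through three successive coherence rewrites without orientation errors, are the points where the Ann-category axioms are used most delicately.
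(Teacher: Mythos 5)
Your proposal is correct and takes essentially the same route as the paper's proof: the paper, too, reduces the proposition to checking that $f\oplus g$, $a^+=id$ and $c^+$ satisfy diagram (\ref{bd11}) --- using exactly the (Ann-1) compatibilities of $(L^{FX},\breve{L}^{FX})$, $(R^{FX},\breve{R}^{FX})$ with $a^+$, $c^+$ together with naturality, only displayed as pasted diagrams instead of your equational rewrites --- and it defines the inverse object by the same equation $u_{A,X}\oplus u_{A',X}=\frak{L}_{FX,A,A'}\circ u_{O,X}$, leaving the unit and inverse-object verifications as routine. The only presentational difference is that you make explicit the transport-of-coherence argument along the faithful strict forgetful functor, which the paper leaves implicit.
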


\begin{proof}
Assume that  $f: (A,u_{A})\rightarrow (B,u_{B})$ and $g: (C,u_{C})\rightarrow (D,u_{D})$ are two morphisms in the category $\mathcal B^{\ast}$. We shall prove that
\[f+g=f\oplus g\]
satisfies the diagram (\ref{bd11}), so it is a morphism of $\mathcal B^{\ast}$. We consider the diagram:
\[\scriptsize\setlength\unitlength{0.5cm}
\begin{picture}(40,11)
\put(6,10){$AFX \oplus   CFX$}
\put(6,7){$(A\oplus   C)FX$}
\put(6,4){$(B\oplus   D)FX$}
\put(6,1){$BFX \oplus   DFX$}

\put(17,10){$(FX)A \oplus   (FX)C$}
\put(17,7){$(FX)(A\oplus   C)$}
\put(17,4){$(FX)(B\oplus   D)$}
\put(17,1){$(FX)B \oplus   (FX)D$}

\put(7.5,9.8){\line(0,-1){2.2}}\put(7.4,9.8){\line(0,-1){2.2}}
\put(7.5,6.8){\vector(0,-1){2.2}}\put(7.7,5,5){$(f\oplus   g)\otimes   id$}
\put(7.5,3.8){\line(0,-1){2.2}}\put(7.4,3.8){\line(0,-1){2.2}}

\put(19,7.5){\vector(0,1){2.2}}\put(19.2,8.5){$\Breve{L}$}
\put(19,6.8){\vector(0,-1){2.2}}\put(15.3,5,5){$id\otimes   (f\oplus   g)$}
\put(19,3.8){\vector(0,-1){2.2}}\put(19.2,2.5){$\Breve{L}$}

\put(9.7,10.1){\vector(1,0){7}}\put(11.4,10.5){$u_{A,X}\oplus   u_{C,X}$}
\put(9.7,7.1){\vector(1,0){7}}\put(11.9,7.5){$u_{A\oplus   C,X}$}
\put(9.7,4.1){\vector(1,0){7}}\put(11.9,4.5){$u_{B\oplus   D,X}$}
\put(9.7,1.1){\vector(1,0){7}}\put(11.4,1.5){$u_{B,X}\oplus   u_{D,X}$}

\put(5.8,10.1){\line(-1,0){5}}
\put(0.8,10.1){\line(0,-1){9}}
\put(0.8,1.1){\vector(1,0){5}}
\put(1,5.5){$(f\otimes   id)\oplus (g\otimes   id)$}

\put(21.8,10.1){\line(1,0){5}}
\put(26.8,10.1){\line(0,-1){9}}
\put(26.8,1.1){\vector(-1,0){4.5}}
\put(21.7,5.5){$(id\otimes   f)\oplus (id\otimes   g)$}

\put(3.5,8.5){(I)}\put(12.5,8.5){(II)}
\put(12.45,5.5){(III)}
\put(12.5,2.5){(IV)}
\put(22,8.5){(V)}
\end{picture}\]

In this diagram, the region (I) commutes thanks to the naturality of $\frak{R}=id$, the region (II) commutes thanks to the determination of  $u_{A\oplus   C}$, the region (IV) commutes thanks to the determination of $u_{B\oplus   D}$, the region (V) commutes thanks to the naturality of  $\frak{L}$; each component of the perimeter commutes since $f$ and $g$ are morphisms of $\mathcal B^{\ast}$. So, the perimeter commutes. Therefore, the region (III) commutes, i.e.,  $f+g=f\oplus   g$ is a morphism of $\mathcal B^{\ast}$.

Next, we prove that $a^+=id$ is a morphism
\[((A,u_{A})+(B,u_{B}))+(C,u_{C})\rightarrow (A,u_{A})+((B,u_{B})+(C,u_{C}))\]
in $\mathcal B^{\ast}$. We consider the following diagram:
\[\scriptsize\setlength\unitlength{0.5cm}
\begin{picture}(26,17)
\put(4,16){$(AFX \oplus   BFX)\oplus   CFX$}
\put(4,13){$(A\oplus   B)FX \oplus   CFX$}
\put(4,10){$((A\oplus   B)\oplus   C)FX$}
\put(4,7){$(FX)((A\oplus   B)\oplus   C)$}
\put(4,4){$(FX)(A\oplus   B)\oplus   (FX)C$}
\put(4,1){$((FX)A\oplus   (FX)B)\oplus   (FX)C$}

\put(16,16){$AFX \oplus   (BFX\oplus   CFX)$}
\put(16,13){$AFX\oplus   (B\oplus   C)FX$}
\put(16,10){$(A\oplus   (B\oplus   C))FX$}
\put(16,7){$(FX)(A\oplus   (B\oplus   C))$}
\put(16,4){$(FX)A\oplus   (FX)(B\oplus   C)$}
\put(16,1){$(FX)A\oplus   ((FX)B\oplus   (FX)C)$}

\put(6.5,15.8){\line(0,-1){2.2}}\put(6.4,15.8){\line(0,-1){2.2}}
\put(6.5,12.8){\line(0,-1){2.2}}\put(6.4,12.8){\line(0,-1){2.2}}
\put(6.5,9.8){\vector(0,-1){2.2}}\put(6.7,8.5){$u_{(A\oplus   B)\oplus   C,X}$}
\put(6.5,6.8){\vector(0,-1){2.2}}\put(6.7,5.5){$\Breve{L}$}
\put(6.5,3.8){\vector(0,-1){2.2}}\put(6.7,2.5){$\Breve{L}\otimes   id$}

\put(18.5,15.8){\line(0,-1){2.2}}\put(18.4,15.8){\line(0,-1){2.2}}
\put(18.5,12.8){\line(0,-1){2.2}}\put(18.4,12.8){\line(0,-1){2.2}}
\put(18.5,9.8){\vector(0,-1){2.2}}\put(15,8.5){$u_{A\oplus   (B\oplus   C),X}$}
\put(18.5,6.8){\vector(0,-1){2.2}}\put(18.7,5.5){$\Breve{L}$}
\put(18.5,3.8){\vector(0,-1){2.2}}\put(18.7,2.5){$id\otimes   \Breve{L}$}

\put(10.4,16.1){\line(1,0){5.3}}\put(10.4,16.2){\line(1,0){5.3}}
\put(8.9,10.1){\line(1,0){6.8}}\put(8.9,10.2){\line(1,0){6.8}}
\put(9.5,7.1){\line(1,0){6.3}}\put(9.5,7.2){\line(1,0){6.3}}
\put(11.9,1.1){\line(1,0){3.8}}\put(11.9,1.2){\line(1,0){3.8}}

\put(3.8,13.1){\line(-1,0){2}}
\put(1.8,13.1){\line(0,-1){9}}
\put(1.8,4.1){\vector(1,0){2}}
\put(2.2,8.5){$\alpha_2  $}

\put(3.8,16.1){\line(-1,0){4}}
\put(-0.2,16.1){\line(0,-1){15}}
\put(-0.2,1.1){\vector(1,0){4}}
\put(0.1,8.5){$\alpha_1$}

\put(21.5,13.1){\line(1,0){1.5}}
\put(23,13.1){\line(0,-1){9}}
\put(23,4.1){\vector(-1,0){0.6}}
\put(21.7,8.5){$\alpha_3$}

\put(22.5,16.1){\line(1,0){2.5}}
\put(25,16.1){\line(0,-1){15}}
\put(25,1.1){\vector(-1,0){1.1}}
\put(25.5,8.5){$\alpha_4$}

\put(3,14.5){(I)}\put(3,11.5){(III)}
\put(21,14.5){(II)}\put(21,11.5){(IV)}
\put(12,8.5){(V)}
\put(12,4){(VI)}

\put(0, -1){{\emph  where} \ $\alpha_1=(u_{A, X}\oplus u_{B,X})\oplus u_{C,X}$}
\put(15,-1) {$\alpha_2=u_{A\oplus B, X}\oplus u_{C,X}$}
\put(1.8,-2) {$\alpha_3=u_{A,X}\oplus u_{B\oplus C,X}$}
\put(15,-2){$\alpha_4=u_{A, X}\oplus (u_{B,X}\oplus u_{C,X})$}
\end{picture}\]

\vspace{1cm}

In the above diagram, the region (I) commutes thanks to the determination of  $u_{A\oplus   B}$, the region (II) commutes thanks to the determination of $u_{B\oplus   C}$, the region (III) commutes thanks to the determination of $u_{(A\oplus   B)\oplus   C}$, the region (IV) commutes thanks to the determination of $u_{A\oplus   (B\oplus   C)}$, 
the region (VI) commutes since $\mathcal A$ is an Ann-category, the perimeter commutes thanks to the naturality of $a^+=id$. Therefore, the region (V) commutes, i.e., $a^+=id$ is a morphism of $\mathcal B^{\ast}$.

To prove that $c^+$ is the morphism
\[(A,u_{A})+(B,u_{B})\rightarrow (B,u_{B})+(A,u_{A})\]
in $\mathcal B^{\ast}$, we consider the following diagram. In this diagram, the region (I) commutes thanks to the determination of  $u_{A\oplus   B}$, the regions  (II),  (IV) commute since $\mathcal A$ is an Ann-category, the region (V) commutes thanks to the determination of $u_{B\oplus   A}$, the perimeter commutes thanks to the naturality of  $c^+$. Therefore, the region (III) commutes, i.e., $c^+$ is a morphism in $\mathcal B^{\ast}$.
\[\scriptsize\setlength\unitlength{0.5cm}
\begin{picture}(26,11)
\put(5,10){$AFX \oplus   BFX$}
\put(5,7){$(A\oplus   B)FX$}
\put(5,4){$(FX)(A\oplus   B)$}
\put(5,1){$(FX)A \oplus   (FX)B$}

\put(16,10){$BFX \oplus   AFX$}
\put(16,7){$(B\oplus   A)FX$}
\put(16,4){$(FX)(B\oplus   A)$}
\put(16,1){$(FX)B \oplus   (FX)A$}

\put(6.5,9.8){\line(0,-1){2.2}}\put(6.4,9.8){\line(0,-1){2.2}}
\put(6.5,6.8){\vector(0,-1){2.2}}\put(6.7,5,5){$u_{A\oplus   B,X}$}
\put(6.5,3.8){\vector(0,-1){2.2}}\put(6.7,2.5){$\Breve{L}$}

\put(17.6,9.8){\line(0,-1){2.2}}\put(17.7,9.8){\line(0,-1){2.2}}

\put(17.6,6.8){\vector(0,-1){2.2}}\put(17.8,5,5){$u_{B\oplus   A,X}$}
\put(17.6,3.8){\vector(0,-1){2.2}}\put(17.8,2.5){$\Breve{L}$}

\put(8.7,10.1){\vector(1,0){7}}\put(12,10.5){$c^+$}
\put(8.7,7.1){\vector(1,0){7}}\put(11.5,7.5){$c^+\otimes   id$}
\put(8.7,4.1){\vector(1,0){7}}\put(11.5,4.5){$id\otimes   c^+$}
\put(9.6,1.1){\vector(1,0){6.1}}\put(12,1.5){$c^+$}

\put(4.8,10.1){\line(-1,0){4}}
\put(0.8,10.1){\line(0,-1){9}}
\put(0.8,1.1){\vector(1,0){4}}
\put(1.1,5.5){$u_{A,X}\oplus u_{B,X} $}

\put(20,10.1){\line(1,0){2.5}}
\put(22.5,10.1){\line(0,-1){9}}
\put(22.5,1.1){\vector(-1,0){1.7}}
\put(22.7,5.7){$u_{B,X}\oplus u_{A,X} $}

\put(3,8.5){(I)}\put(11.5,8.5){(II)}
\put(11.45,5.5){(III)}
\put(11.5,2.5){(IV)}
\put(21,5.5){(V)}
\end{picture}\]
One can verify that
$((O, u_{O, X}=\hat L^{-1}_{FX}),id,id)$  is the unit constraint of $\mathcal B^{\ast}$. Finally, we shall prove that each object of  $\mathcal B^{\ast}$ is invertible.

Let  $(A,u_A)$ be an object of $\mathcal B^{\ast}$. By the condition (\ref{bd7}), there exsits  an object $A'\in Ob(\mathcal A)$ such that 
\[A\oplus   A'=O.\]
The family of natural transformations $u_{A', X}: A'\otimes   FX\rightarrow FX\otimes   A'$ is defined by:
\begin{equation*}
u_{A, X}\oplus   u_{A', X}=\frak{L}_{FX, A, A'}\circ u_{O, X}.
\end{equation*}

One can prove that $(A', u_{A'})$ is the invertible object of the object $(A,u_A)$ in the category $\mathcal B^\ast$.
\end{proof}
%%%%%%%%%%%%%%%%%%%%%%%%%%%%%%%%%%%%%%%%%%%%%%%%
\begin{Lemma}\label{thm3}
For any two objects $(A,u_{A}), (B,u_{B})$ of $\mathcal B^{\ast}$, $(A\otimes B, u_{A\otimes B})$ is an object of $\mathcal B^{\ast}$, where $u_{A\otimes B}$ is defined by:
\begin{equation*}
u_{A\otimes B,X}=(u_{A,X}\otimes id_B)\circ (id_A\otimes u_{B,X}), \ \emph{for all}\   X\in \mathcal A.
\end{equation*}
\end{Lemma}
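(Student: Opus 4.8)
The plan is to verify the three requirements in the definition of an object of $\mathcal B^{\ast}$ for the pair $(A\otimes B, u_{A\otimes B})$: that $u_{A\otimes B,I}=id$, that $u_{A\otimes B}$ satisfies diagram (\ref{bd9}), and that it satisfies diagram (\ref{bd10}). The first is immediate: since $u_{A,I}=id$ and $u_{B,I}=id$, the defining formula gives $u_{A\otimes B,I}=(u_{A,I}\otimes id_B)\circ(id_A\otimes u_{B,I})=id$. The remaining two are established by large diagram chases, in the same spirit as diagrams (5) and (6) used in the proof of Lemma \ref{thm1}, whose regions each commute for one of a small number of reasons: the definition of $u_{A\otimes B}$, the axioms (\ref{bd9}) and (\ref{bd10}) for the given modules $(A,u_A)$ and $(B,u_B)$, the coherence axioms (Ann-1)--(Ann-3) of $\mathcal A$, naturality of the constraints $\frak{L}$, $\Breve{F}$ and $\widetilde{F}$, the interchange law for the bifunctor $\otimes$, and the almost-strictness of $\mathcal A$ (so that $a=\frak{R}=id$).

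For diagram (\ref{bd10}), I would first expand both occurrences of $u_{A\otimes B}$ by the defining formula, so that the top edge of the square becomes a composite of four ``elementary commutations'': $B$ past $FX$, $A$ past $FX$, $B$ past $FY$, and $A$ past $FY$, all performed with $a=id$ so that no reassociation is recorded. Because the commutation of $A$ past $FX$ and the commutation of $B$ past $FY$ act on disjoint tensor factors, the interchange law lets me reorder them so that the two $B$-commutations become adjacent and the two $A$-commutations become adjacent. I then apply diagram (\ref{bd10}) for $(B,u_B)$ to fold the two $B$-commutations, together with $\widetilde{F}$, into a single commutation of $B$ past $F(X\otimes Y)$, and likewise diagram (\ref{bd10}) for $(A,u_A)$ for the $A$-commutations. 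Reading off the definition $u_{A\otimes B, X\otimes Y}=(u_{A,X\otimes Y}\otimes id_B)\circ(id_A\otimes u_{B,X\otimes Y})$ then identifies the result with the bottom-left composite of (\ref{bd10}), proving commutativity.

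For diagram (\ref{bd9}), the key preliminary step is to split the distributivity morphism $\Breve L^{A\otimes B}_{FX,FY}=\frak{L}_{A\otimes B,FX,FY}$ using the first square of (Ann-2). By almost-strictness ($a=id$) this reads
\[
\Breve L^{A\otimes B}_{FX,FY}=\Breve L^{A}_{B FX,\, B FY}\circ\bigl(id_A\otimes\Breve L^{B}_{FX,FY}\bigr),
\]
so that the left distributivity of $A\otimes B$ is carried out by first distributing through $B$ and then through $A$. With this factorization in place I run the $B$-part through diagram (\ref{bd9}) for $(B,u_B)$ and the $A$-part through diagram (\ref{bd9}) for $(A,u_A)$; naturality of $\frak{L}$ slides the partially commuted $u$-morphisms past the distributivity isomorphisms, $\Breve{F}$ is carried along on the $F$-factors, and the almost-strict identity $\frak{R}=id$ identifies $(FX\oplus FY)\otimes(A\otimes B)$ with $(FX\otimes(A\otimes B))\oplus(FY\otimes(A\otimes B))$. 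Decomposing the resulting diagram into regions, each commutes by the definition of $u_{A\otimes B}$, by (\ref{bd9}) for $A$ or for $B$, by (Ann-2), or by naturality of $\frak{L}$ and $\Breve{F}$; the perimeter then forces the region expressing (\ref{bd9}) for $u_{A\otimes B}$ to commute.

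I expect diagram (\ref{bd9}) to be the main obstacle, since it is the place where the two genuinely non-identity constraints of the almost-strict structure, namely the left distributivity $\frak{L}$ and, through $\Breve L^{A\otimes B}$, the coherence (Ann-2), interact with the two module axioms at once. The delicate point is to apply (Ann-2) so that the $A$- and $B$-commutations each ``see'' their own instance of (\ref{bd9}), with naturality of $\frak{L}$ providing the passage of the half-commuted configurations through the distributivity morphisms. By contrast, diagram (\ref{bd10}) requires only the interchange law to separate the $A$- and $B$-journeys and is essentially bookkeeping once $a=id$.
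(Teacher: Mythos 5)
Your proposal is correct and follows essentially the same route as the paper: the unit check is identical, and your treatment of diagram (\ref{bd9}) --- factoring $\Breve L^{A\otimes B}$ through (Ann-2) with $a=id$, then chasing a large diagram whose regions commute by the definition of $u_{A\otimes B}$, the module axiom (\ref{bd9}) for $(A,u_A)$ and $(B,u_B)$ separately, naturality of $\frak L$ and $\Breve F$, and $\frak R=id$ --- is exactly the paper's Diagram (7) argument. The only difference is that for diagram (\ref{bd10}) the paper simply cites Theorem 3.3 of Majid, whereas you reprove that result inline via the interchange-law rearrangement of the four elementary commutations; your argument is valid and makes the proof self-contained, at the cost of redoing a known step.
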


\begin{proof}
Let $(A,u_{A}), (B,u_{B})$ be two objects of $\mathcal B^{\ast}$. Since\  $u_{A,I}=id$ and  $u_{B,I}=id$, we have $u_{A\otimes B, I}=id$. Moreover, by Theorem 3.3 \cite{Maj}, $u_{A\otimes B}$ satisfies the diagram  $(\ref{bd10})$. 

Finally, to prove that $u_{A\otimes B}$ satisfies the diagram (\ref{bd9}), we consider the diagram (7) (see page 14). In the  diagram (7), the region (I) commutes since  $(B,u_{B})$ satisfies the diagram (\ref{bd9}), the regions (II), (VII) and (IX) commute thanks to the naturality of $a^+=id$, the region (III) commutes thanks to the naturality of $\frak{L}$, the regions (IV), (XI) and the perimeter commutes since $\mathcal A$ is an Ann-category,  the regions (VI), (VIII) commute thanks to the determination of $u_{AB}$, the region (X) commutes since $(A,u_{A})$ satisfies the diagram (\ref{bd9}), the region (XII) commutes thanks to the naturality of $\frak{R}=id$. Therefore, the region (V) commutes, i.e., $(AB,u_{AB})$ satisfies the diagram (\ref{bd9}). So $(A\otimes B, u_{A\otimes B})$ is an object of $\mathcal B^{\ast}$.
\end{proof}

By Lemma \ref{thm3}, we can determine  the operator ``$\times$" of $\mathcal B^{\ast}$ 
where the product of two objects is defined by
\begin{equation*}
(A,u_{A})\times (B,u_{B})=(A\otimes B, u_{A\otimes B}),
\end{equation*}
and the tensor product of two morphisms is  the tensor product of two morphisms in $\mathcal A$.

\begin{Proposition}\label{thm4}
$\mathcal B^{\ast}$ is a strict monoidal category.
\end{Proposition}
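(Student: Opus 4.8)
The plan is to establish the three ingredients of a strict monoidal structure on $\mathcal B^{\ast}$: that $\otimes$ is a bifunctor, that there is a strict unit object, and that the associativity constraint $a=id$ is a morphism of $\mathcal B^{\ast}$. The action of $\otimes$ on objects is already supplied by Lemma \ref{thm3}, and since $\mathcal A$ is almost strict the underlying constraints $a,l,r$ of $\mathcal A$ are identities; hence in each case it remains only to verify the module-compatibility diagram (\ref{bd11}), i.e. that the relevant underlying identity morphism of $\mathcal A$ respects the families $u$.

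First I would check functoriality. Given morphisms $f:(A,u_A)\to(B,u_B)$ and $g:(C,u_C)\to(D,u_D)$ of $\mathcal B^{\ast}$, I would show that $f\otimes g$ satisfies (\ref{bd11}) relative to $(A\otimes C,u_{A\otimes C})$ and $(B\otimes D,u_{B\otimes D})$. The argument is a diagram chase in which the square for $f\otimes g$ is decomposed, via the defining formula $u_{A\otimes C,X}=(u_{A,X}\otimes id_C)\circ(id_A\otimes u_{C,X})$, into two squares: one is (\ref{bd11}) for $g$ tensored on the left by $id_A$, the other is (\ref{bd11}) for $f$ tensored on the right by $id_D$, the two being glued together using bifunctoriality (the interchange law) of $\otimes$ in $\mathcal A$.

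Next I would treat the unit and the associativity constraint. For the unit I take $(I,u_{I,X}=id)$; since the unit constraints of $\mathcal A$ are strict this is a legitimate object of $\mathcal B^{\ast}$ ($u_{I,I}=id$, and (\ref{bd9}), (\ref{bd10}) hold trivially), and the formula of Lemma \ref{thm3} gives $u_{I\otimes A,X}=u_{A,X}=u_{A\otimes I,X}$, so $(I,u_I)$ is a strict unit. For associativity, with $a=id$ in $\mathcal A$ one has $(A\otimes B)\otimes C=A\otimes(B\otimes C)$ on underlying objects, and I would unwind the defining formula of Lemma \ref{thm3} twice on each side; after applying the interchange law both bracketings reduce to the common composite
\[
(u_{A,X}\otimes id_B\otimes id_C)\circ(id_A\otimes u_{B,X}\otimes id_C)\circ(id_A\otimes id_B\otimes u_{C,X}),
\]
so $u_{(A\otimes B)\otimes C}=u_{A\otimes(B\otimes C)}$ and $a=id$ is a morphism of $\mathcal B^{\ast}$.

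The main obstacle is the associativity verification: one must carefully unwind the nested definition of $u_{A\otimes B}$ and repeatedly invoke the interchange law in $\mathcal A$ so that the two bracketings of the triple product yield the same natural transformation. This is the module-theoretic counterpart of the coherence already used for (\ref{bd10}) in Lemma \ref{thm3} (via \cite[Theorem 3.3]{Maj}), and once it is settled the pentagon and triangle identities are automatic from the strictness of $a,l,r$ in $\mathcal A$.
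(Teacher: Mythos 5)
Your proof is correct, but it takes a genuinely different route from the paper's. The paper does not verify bifunctoriality or strict associativity at all: it cites Theorem 3.3 of \cite{Maj} for both, the point being that with respect to $\otimes$ alone, $\mathcal B^{\ast}$ is exactly Majid's dual of the underlying monoidal functor $(F,\widetilde F)$ --- the additive condition (\ref{bd9}) only cuts out a full subcategory, which is closed under $\times$ by Lemma \ref{thm3}, so the monoidal structure is simply inherited; only the unit object $(I,id)$ with $l=r=id$ is left to the reader, just as you check it. Your direct verifications --- splitting the square (\ref{bd11}) for $f\otimes g$ into two squares glued by the interchange law, one using (\ref{bd11}) for $f$ and the other (\ref{bd11}) for $g$, and unwinding both bracketings of the triple product to the common composite $(u_{A,X}\otimes id\otimes id)\circ(id\otimes u_{B,X}\otimes id)\circ(id\otimes id\otimes u_{C,X})$ --- are in substance the proof of the theorem the paper cites, and they are carried out correctly, including the point needed for the unit: strictness and naturality of $l,r$ force $id_I\otimes f=f=f\otimes id_I$, so $(I,id)$ does satisfy (\ref{bd9}) and (\ref{bd10}) trivially and is a strict unit. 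What your route buys is self-containedness: it makes visible that only the interchange law in $\mathcal A$, the strictness of $a,l,r$, and condition (\ref{bd11}) are used. What the paper's route buys is brevity and the conceptual remark that the $\otimes$-side of the dual Ann-category is literally Majid's dual monoidal category, so no multiplicative statement needs to be re-proved.
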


\begin{proof}
Assume that $f: (A,u_{A})\rightarrow (B,u_{B})$ and $g: (C,u_{C})\rightarrow (D,u_{D})$ are two morphisms in the category $\mathcal B^{\ast}$. By Theorem 3.3 \cite{Maj}, the morphism
\begin{equation*}
f\times g=f\otimes g: (A,u_{A})\times (C,u_{C})\rightarrow (B,u_{B})\times (D,u_{D})
\end{equation*}
satisfies the diagram $(\ref{bd11})$, i.e., $f\times g$ is a morphism in $\mathcal B^{\ast}$. 

The composition of two morphisms  in $\mathcal B^{\ast}$ is the normal composition.
By Theorem 3.3 \cite{Maj}, $\mathcal B^{\ast}$ has the associativity constraint be strict.
One can easily prove that  $(I, id)$ is an object in $\mathcal B^{\ast}$ and it together with the strict constraints $l=id, r=id$ is the unit constraint of the operator $\times$  in $\mathcal B^{\ast}$.
\end{proof}

\begin{Theorem}\label{thm5}
$\mathcal B^{\ast}$ is an Ann-category with the distributivity constraints are given by
\begin{eqnarray*}
\frak L_{(A,u_{A}), (B,u_{B}), (C,u_{C})}=\frak L_{A,B,C}, \  
\frak R_{(A,u_{A}), (B,u_{B}), (C,u_{C})}=id.
\end{eqnarray*}
 \end{Theorem}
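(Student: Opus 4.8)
The plan is to supply the one remaining datum of an Ann-category---the distributivity constraints $\mathfrak L,\mathfrak R$---and to verify the coherence axioms (Ann-1)--(Ann-3), the other two pieces being already in hand: Proposition~\ref{thm2} makes $(\mathcal B^\ast,+)$ a symmetric categorical group (with $a^+=id$, $c^+_{(A,u_A),(B,u_B)}=c^+_{A,B}$ and zero object $(O,u_O)$), and Proposition~\ref{thm4} makes $(\mathcal B^\ast,\times)$ a strict monoidal category (with unit $(I,id)$). So it remains to show that $\mathfrak L_{(A,u_A),(B,u_B),(C,u_C)}=\mathfrak L_{A,B,C}$ and $\mathfrak R_{(A,u_A),(B,u_B),(C,u_C)}=id$ are natural isomorphisms of $\mathcal B^\ast$ and that (Ann-1)--(Ann-3) hold.

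The organizing device I would use is the forgetful functor $U\colon\mathcal B^\ast\to\mathcal A$, $(A,u_A)\mapsto A$, $f\mapsto f$. By the definition of morphisms in $\mathcal B^\ast$ it is faithful; by Lemmas~\ref{thm1} and~\ref{thm3} together with the definitions of $+$ and $\times$ it carries $\oplus,\otimes$ strictly to those of $\mathcal A$; and it sends each structural constraint of $\mathcal B^\ast$ to the corresponding constraint of the almost strict Ann-category $\mathcal A$ (namely $c^+\mapsto c^+$, $\mathfrak L\mapsto\mathfrak L$, and $a^+,a,l,r,\mathfrak R\mapsto id$). Hence every coherence diagram demanded by (Ann-1)--(Ann-3) in $\mathcal B^\ast$ is mapped by $U$ onto the identical diagram in $\mathcal A$, which commutes because $\mathcal A$ is an Ann-category; faithfulness of $U$ then forces commutativity back in $\mathcal B^\ast$. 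The same remark reduces naturality of $\mathfrak L,\mathfrak R$ in $\mathcal B^\ast$ to naturality in $\mathcal A$, and since $\mathfrak L_{A,B,C}$ is invertible in $\mathcal A$ its inverse again satisfies~(\ref{bd11}); so $\mathfrak L,\mathfrak R$ are isomorphisms of $\mathcal B^\ast$. Everything therefore reduces to one genuinely new point: that $\mathfrak L$ and $\mathfrak R$ are morphisms of right $(\mathcal B,F)$-modules, i.e. satisfy~(\ref{bd11}).

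For $\mathfrak L$ I would check that $\mathfrak L_{A,B,C}\colon\big(A\otimes(B\oplus C),\,u_{A(B\oplus C)}\big)\to\big((A\otimes B)\oplus(A\otimes C),\,u_{(AB)\oplus(AC)}\big)$ satisfies~(\ref{bd11}). Expanding $u_{A(B\oplus C)}$ by Lemma~\ref{thm3} (with $u_{B\oplus C}$ supplied by Lemma~\ref{thm1}) and $u_{(AB)\oplus(AC)}$ by Lemma~\ref{thm1} (with $u_{AB},u_{AC}$ supplied by Lemma~\ref{thm3}), the required square unfolds into a composite diagram whose cells close up by: naturality of $\mathfrak L$; the (Ann-2) coherence relating $a$ and $\mathfrak L$; and the module axioms~(\ref{bd9}),~(\ref{bd10}) for $A,B,C$---exactly the ingredients already used in Lemmas~\ref{thm1} and~\ref{thm3}. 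This is laborious but routine.

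The main obstacle is $\mathfrak R=id$. Because $\mathcal A$ is almost strict, $\mathfrak R_{A,B,C}=id$, so the underlying objects $(A\oplus B)\otimes C$ and $(A\otimes C)\oplus(B\otimes C)$ literally coincide, and for $id$ to be a morphism of $\mathcal B^\ast$ the diagram~(\ref{bd11}) demands the stronger rigidity statement $u_{(A\oplus B)\otimes C,X}=u_{(A\otimes C)\oplus(B\otimes C),X}$ for every $X$. Here the left-hand side is built by Lemma~\ref{thm3} from $(A\oplus B,u_{A\oplus B})$ and $(C,u_C)$---so it carries an inner $\mathfrak L^{-1}_{FX,A,B}$---while the right-hand side is built by Lemma~\ref{thm1} from $A\otimes C$ and $B\otimes C$ and carries an outer $\mathfrak L^{-1}_{FX,AC,BC}$. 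I expect the equality to come from the (Ann-2) diagrams in which $\mathfrak L$ and $\mathfrak R$ occur together (the one with apex $(A(X\oplus Y))B$ and the full distributivity diagram with $v$), combined with naturality of $\mathfrak L$ and the collapse $\mathfrak R=id$. Showing that this package forces the two module structures to agree on the nose---not merely up to conjugation---is the delicate step; once it is settled, $id$ satisfies~(\ref{bd11}), and the reduction of the second paragraph finishes the proof.
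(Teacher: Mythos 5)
Your proposal is correct and takes essentially the same approach as the paper: it rests on Propositions \ref{thm2} and \ref{thm4}, inherits the axioms (Ann-1)--(Ann-3) from $\mathcal A$ because the morphisms and constraints of $\mathcal B^{\ast}$ are literally those of $\mathcal A$ (the paper states this directly; your faithful-forgetful-functor phrasing is the same argument made explicit), and isolates as the only substantive new point the verification that $\frak L$ and $\frak R=id$ satisfy (\ref{bd11}) --- exactly the step the paper dispatches with ``one can prove''. Your sketch of the delicate $\frak R=id$ step is also on target: expanding both module structures via Lemmas \ref{thm1} and \ref{thm3} and using naturality of $\frak R=id$, the required equality $u_{(A\oplus B)\otimes C,X}=u_{(A\otimes C)\oplus(B\otimes C),X}$ reduces precisely to the third (Ann-2) diagram, which in the almost strict case ($a=id$, $\frak R=id$) reads $\frak L_{FX,A,B}\otimes id_C=\frak L_{FX,A\otimes C,B\otimes C}$.
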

\begin{proof}
By Proposition \ref{thm2}, $(\mathcal B^{\ast}, +)$ is a symmetric categorical group. By Proposition \ref{thm4}, $(\mathcal B^{\ast}, \times)$ is a monoidal category. 
One can  prove that
\begin{eqnarray*} 
\frak{L}:(A,u_{A})\times ((B,u_{B})+(C,u_{C}))\rightarrow (A,u_{A})\times (B,u_{B})+ (A,u_{A})\times (C,u_{C}),\\
\frak{R}=id:((A,u_{A} + (B,u_{B})) \times (C,u_{C})\rightarrow (A,u_{A})\times (C,u_{C})+ (B,u_{B})\times (C,u_{C})
\end{eqnarray*}
are morphisms in $\mathcal B^{\ast}$.

Moreover, the constraints  $a^+=id, c^+, a=id, \frak{L}, \frak{R}=id$ of the  Ann-category $\mathcal A$ satisfy the conditions (Ann-1), (Ann-2), (Ann-3), so, in the category $\mathcal B^{\ast}$, they also satisfy these conditions. Thus $\mathcal B^{\ast}$ is an  Ann-category.
\end{proof}

\newpage
The following proposition is obvious.

\begin{Proposition} $\mathcal B^{\ast}$ is functored over $\mathcal A$ with the forgetful Ann-functor\linebreak
$$F^{\ast}:\mathcal B^{\ast}\rightarrow \mathcal A.$$
 \end{Proposition}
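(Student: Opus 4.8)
The plan is to take $F^{\ast}$ to be the evident forgetful functor: on objects it sends $(A,u_A)\mapsto A$, and on a morphism $f\colon(A,u_A)\to(B,u_B)$ of $\mathcal B^{\ast}$ it returns the underlying arrow $f\colon A\to B$ of $\mathcal A$. Functoriality is immediate, since composition and identities in $\mathcal B^{\ast}$ are computed in $\mathcal A$.

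To upgrade $F^{\ast}$ to an Ann-functor in the sense of Definition 2.2, I must exhibit structure isomorphisms $\breve{F^{\ast}}$ for $\oplus$ and $\widetilde{F^{\ast}}$ for $\otimes$. The crucial point is that $F^{\ast}$ strictly preserves both operations: by Lemma \ref{thm1} and Lemma \ref{thm3} the sum and product in $\mathcal B^{\ast}$ act on underlying objects exactly as $\oplus$ and $\otimes$ in $\mathcal A$, so
\[ F^{\ast}\big((A,u_A)+(B,u_B)\big)=A\oplus B,\qquad F^{\ast}\big((A,u_A)\times(B,u_B)\big)=A\otimes B. \]
Accordingly I take both $\breve{F^{\ast}}$ and $\widetilde{F^{\ast}}$ to be identities.

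With these identity structure maps, verifying the Ann-functor axioms reduces to checking that $F^{\ast}$ carries each constraint of $\mathcal B^{\ast}$ to the corresponding constraint of $\mathcal A$. This is precisely how $\mathcal B^{\ast}$ was built: Proposition \ref{thm2} gives $a^{+}=id$ and $c^{+}_{(A,u_A),(B,u_B)}=c^{+}_{A,B}$, Proposition \ref{thm4} gives $a=id$ and identity unit constraints, and Theorem \ref{thm5} gives $\frak L_{(A,u_A),(B,u_B),(C,u_C)}=\frak L_{A,B,C}$ together with $\frak R=id$. Hence $(F^{\ast},id)$ is a symmetric monoidal functor for $\oplus$ and an $A$-functor for $\otimes$, and the two distributivity-compatibility diagrams of Definition 2.2 collapse to identities, every arrow in them being either an identity or the equality $\frak L=\frak L$, $\frak R=id=\frak R$.

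There is essentially no obstacle here: the entire content is the observation that $\mathcal B^{\ast}$ was constructed so that forgetting the module datum $u_A$ is a strict map of Ann-categories. The only step deserving a line of care is confirming that $\breve{F^{\ast}}=id$ and $\widetilde{F^{\ast}}=id$ genuinely satisfy the axioms of a symmetric monoidal functor and of an $A$-functor; but each relevant coherence square has identities along both routes, since $\mathcal A$ is almost strict and $\mathcal B^{\ast}$ inherits exactly its constraints. This justifies the remark that the proposition is obvious.
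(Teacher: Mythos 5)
Your proposal is correct and matches the paper's intent exactly: the paper states this proposition without proof (``The following proposition is obvious''), and your argument is precisely the routine verification behind that remark. Since the operations and all constraints of $\mathcal B^{\ast}$ are by construction those of $\mathcal A$ on underlying objects (Lemma \ref{thm1}, Lemma \ref{thm3}, Propositions \ref{thm2}, \ref{thm4}, and Theorem \ref{thm5}), the forgetful functor with identity structure isomorphisms $\breve{F^{\ast}}=id$, $\widetilde{F^{\ast}}=id$ is indeed a strict Ann-functor, as you show.
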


\noindent{\bf Example 1. The center of an Ann-category $\mathcal A$}

Let  $\mathcal A$ be an Ann-category. Let $\mathcal B=\mathcal A$ and $F=id$. Then $\mathcal B^{\ast}=\mathcal C_{\mathcal A}$, where $\mathcal C_{\mathcal A}$ is the center of the Ann-category $\mathcal A$ which is built in \cite{Q4}. This is a braided Ann-category with the quasi-symmetric
 \[c_{(A,u_A), (B,u_B)}=u_{A,B}: A\otimes   B\rightarrow B\otimes   A.\]

Next, we shall apply above results to build the  dual Ann-category of the pair $(\mathcal B,F)$, where $\mathcal B=(R',M',f')$, $\mathcal A=(R,M,f)$ are Ann-categories.
\vspace{0.2cm}

\noindent{\bf  Example 2.\ Duals of an Ann-category of the type $(R,M)$}

Let $R$ be a ring and   $M$ be a  $R$-bimodule.  An  Ann-category  of the type $(R,M)$ is a category $\mathcal I$ whose objects are elements of $R$, and whose morphisms are automorphisms, $(x,a): x\rightarrow x, \ \forall a\in M$. 
The composition of morphisms is the addition in $M$. The two operators  $\oplus$ and $\otimes$ of  $\mathcal I$ are given by
\begin{eqnarray*}
x\oplus y=x+y, & (x,a)\oplus (y,b)=(x+y,a+b),\\
x\otimes y=x.y,& (x,a)\otimes (y,b)=(xy, xb+ay).
\end{eqnarray*}

All constraints of $\mathcal I$ are strict, except for the left distributivity constraint and the commutativity constraint given by
\begin{eqnarray*}
\frak{L}_{x,y,z}&=&(\bullet, \lambda(x,y,z)):x(y+z)\rightarrow xy+xz,\\ 
c^+_{x,y}&=&(\bullet,\eta(x,y)): x+y\rightarrow y+x,
\end{eqnarray*}
where $\lambda:R^{3}\rightarrow M, \eta: R^{2}\rightarrow M $  are functions satisfying the some certain coherence conditions (for detail, see \cite{Q2}, \cite{Q3}).

Let $\mathcal A$ be an almost strict  Ann-category of the type $(R,M)$ and $\mathcal B$  be an
almost strict  Ann-category of the type  $(R',M')$. Let $(F, \Breve{F},\widetilde{F}): \mathcal B\rightarrow \mathcal A$ be an Ann-functor. Then, by Theorem 4.3 \cite{Q5}, $F$ is a functor of the type $(p,q)$, i.e.,
\[F(x) = p(x), F(x, a) = (p(x), q(a)),\]
where  $p: R'\rightarrow R $ is a ring homomorphism and $q: M' \rightarrow  M$ is a group homomorphism and 
\[q(xa) = p(x)q(a),\  \  q(ax) = q(a)p(x),\ \textrm{for all}\ x \in R, a \in M. \]
 Moreover, $\Breve{F},\widetilde{F}$ are associated, respectively, to $\mu, \nu$ which satisfy some certain coherence conditions (for detail, see Theorem 4.4 \cite{Q5}).

According to the above steps, each object of   $\mathcal B^{\ast}$ is a pair $(r,u_{r})$, where $r$ is in the centerization of $Imp=p(R')$ in the ring $R$, (i.e., $rp(x)=p(x)r\ \forall x\in R'$)
and $u_{r}:R'\rightarrow M$ is a function satisfying the condition  $u_{r,1}=0$ and the two following conditions for all $x,y\in R'$:
 \begin{eqnarray*}
u(r,x)-u(r,x+y)+u(r,y)&=&\mu(x,y)r+r\mu(x,y)-\lambda(r,px,py),\\
xu(r,y)-u(r,xy)+u(r,x)y&=&r\nu(x,y)-\nu(x,y)r.
\end{eqnarray*}
We now describe a morphism
$f: (r,u_{r})\rightarrow (s,u_{s})$  of $\mathcal B^{\ast}$. Since $f: r\rightarrow s$ is a morphism in the Ann-category $\mathcal A$, $s=r$, and $f=(r,a)$ with $a\in M$.

From the commutation of the diagram (\ref{bd11}), we have
\begin{equation*}
p(x)a=ap(x),\ \textrm{for all}\ x\in R'.
\end{equation*}

Now,  $\mathcal B^{\ast}$  is an Ann-category with the two operators given by
\begin{eqnarray*}
(r,u_{r})+(s,u_{s})&=&(r+s,u_{r+s}),\\
(r,u_{r})\times (s,u_{s})&=&(rs,u_{rs}),
\end{eqnarray*}
where
\begin{eqnarray*}
u_{r+s,x}&=&u_{r,x}+u_{s,x}-\lambda(px,r,s),\\
u_{rs,x}&=&u_{r,x}s+r.u_{s,x},
\end{eqnarray*}
and $f+g=f\oplus g$,\ $f\times g=f\otimes g$ where $f: (r,u_{r})\rightarrow (r,u_{r}), g:(s,u_{s})\rightarrow (s,u_{s})$.

All constraints of $\mathcal B^{\ast}$ are strict, except for the commutativity constraint and the left distributivity constraint given by
\begin{eqnarray*}
c^{+}_{(r,u_{r}), (s,u_{s})}&=& c^+_{r,s}=(\bullet,\eta(r,s)),\\
\frak L_{(r,u_{r}), (s,u_{s}), (t,u_{t})}&=&\frak L_{r,s,t}=(\bullet, \lambda(r,s,t)).
\end{eqnarray*}

The invertible object of the object $(r,u_{r})$ respect to the operator $+$ is  $(-r,u_{-r})$, where $-r$ is the opposite element of  $r$ in the group $(R,+)$ and $u_{-r}: R'\rightarrow M$ is given by:

\begin{equation*}
u_{-r,x}=\lambda(px,r,-r)-u_{r,x}.
\end{equation*}

\newpage
\[\begin{sideways}
\scriptsize\setlength\unitlength{0.5cm}
\begin{picture}(40,16)

\put(2, 13){$A F(X\oplus   Y)\oplus B F(X\oplus   Y)$}
\put(2, 10){$(A\oplus   B) F(X\oplus   Y)$}
\put(2, 7){$(F(X \oplus   Y))(A\oplus   B)$}
\put(2, 4){$(F(X\oplus    Y))A\oplus   (F(X\oplus   Y))B$}

\put(15,13){$A(FX\oplus   FY)\oplus   B(FX\oplus   FY)$}\put(22.5,13.5){\vector(1,1){2.2}}
                                        \put(21.8,14.5){$\Breve{L}\oplus   \Breve{L}$}
\put(15,10){$(A\oplus   B)(FX\oplus   FY)$}\put(20.5,10.1){\vector(1,0){4.5}}
                                 \put(21.4,10.5){$\Breve{L}^{A\oplus     B}_{FX,FY}$}
\put(15,7){$(FX\oplus   FY)(A\oplus   B)$}\put(20.5,7.1){\vector(1,0){4.5}}
                                 \put(22,7.3){$id$}
\put(15,4){$(FX\oplus   FY)A\oplus   (FX\oplus   FY)B$}\put(22.5,3.8){\vector(1,-1){2.2}}
                                        \put(22.7,2.5){$id$}

\put(25.3,16){$(AFX\oplus   AFY)\oplus   (BFX\oplus   BFY)$}
\put(25.3,13){$(AFX\oplus   BFX)\oplus   (AFY\oplus   BFY)$}
\put(25.3,10){$(A\oplus   B)FX\oplus   (A\oplus   B)FY$}
\put(25.3,7){$(FX)(A\oplus   B)\oplus   (FY)(A\oplus   B)$}
\put(25.3,4){$((FX)A\oplus   (FX)B)\oplus   ((FY)A\oplus   (FY)B)$}
\put(25.3,1){$((FX)A\oplus   (FY)A)\oplus   ((FX)B\oplus   (FY)B)$}

\put(4,12.8){\line(0,-1){2.2}}
\put(4.1,12.8){\line(0,-1){2.2}}
\put(4,9.8){\vector(0,-1){2.2}}\put(4.2,8.5){$u_{A\oplus   B, X\oplus   Y}$}
\put(4,6.8){\vector(0,-1){2.2}}\put(4.2,5.5){$\Breve{L}^{F(X\oplus   Y)}_{A,B}$}

\put(18,12.8){\line(0,-1){2.2}}
\put(18.1,12.8){\line(0,-1){2.2}}
\put(18,6.8){\vector(0,-1){2.2}}\put(18.2,5.5){$\Breve{L}^{FX\oplus   FY}_{A,B}$}

\put(14.7,4.1){\vector(-1,0){4.5}}\put(10,3){$(\Breve{F}\otimes   id) \oplus   (\Breve{F}\otimes   id)$}
\put(14.7,7.1){\vector(-1,0){7}}\put(10.5,7.3){$\Breve{F}\otimes   id$}
\put(14.7,10.1){\vector(-1,0){7.5}}\put(10.5,10.3){$id \otimes   \Breve{F}$}
\put(14.7,13.1){\vector(-1,0){5.6}}\put(9.4,13.3){$(id\otimes   \Breve{F}) \oplus   (id\otimes   \Breve{F})$}

\put(29.5,13.5){\vector(0,1){2.2}}\put(29.7,14.5){$v$}
\put(29.5,12.8){\line(0,-1){2.2}}\put(29.6,12.8){\line(0,-1){2.2}}
\put(29.5,9.8){\vector(0,-1){2.2}}\put(24,8.5){$u_{A\oplus   B, X}\oplus   u_{A\oplus   B, Y}$}
\put(29.5,6.8){\vector(0,-1){2.2}}\put(27,5.5){$\Breve{L}\oplus   \Breve{L}$}
\put(29.5,3.8){\vector(0,-1){2.2}}\put(29.7,2.5){$v$}

\put(1.8,13.1){\line(-1,0){2}}
\put(-0.2,13.1){\line(0,-1){9}}
\put(-0.2,4.1){\vector(1,0){2}}
%\put(0,9.2){$u_{A,X\oplus   Y}$}
\put(0.3,8.5){$t_1$}
%\put(0,7.8){$u_{B,X\oplus   Y}$}

\put(34.5,13.1){\line(1,0){2.5}}
\put(37,13.1){\line(0,-1){9}}
\put(37,4.1){\vector(-1,0){0.8}}

% \put(35.5,10.2){$u_{A,X}$}
% \put(36,9.5){$\oplus  $}
% \put(35.5,8.8){$u_{B,X}$}
% \put(36,8){$\oplus  $}
% \put(35.5,7.2){$u_{A,Y}$}
% \put(36,6.4){$\oplus  $}
% \put(35.5,5.6){$u_{B,Y}$}

\put(36, 8.5){$t_2$}

\put(34.5,16.1){\line(1,0){3.5}}
\put(38,16.1){\line(0,-1){15}}
\put(38,1.1){\vector(-1,0){1.8}}

\put(38.4, 8.5){$t_3$}

% \put(38.5,10.2){$u_{A,X}$}
% \put(39,9.5){$\oplus  $}
% \put(38.5,8.8){$u_{A,Y}$}
% \put(39,8){$\oplus  $}
% \put(38.5,7.2){$u_{B,X}$}
% \put(39,6.4){$\oplus  $}
% \put(38.5,5.6){$u_{B,Y}$}

\put(1,11.5){(I)}\put(11,11.5){(II)}\put(22,11.5){(III)}
                 \put(11,5.5){(V)}\put(22,5.5){(VI)}
                 
\put(17.5,8.5){(IV)}
\put(32,8.5){(VII)}
\put(32,14.5){(VIII)}
\put(17,0){\text{Diagram (5)}}

\put(0, -1){{\emph where} \ $t_1=u_{A, X\oplus Y}\oplus u_{B, X\oplus Y}$}
\put(1.8,-2) {$t_2=(u_{A,X}\oplus u_{B,X})\oplus (u_{A,Y}\oplus u_{B,Y})$}
\put(1.8,-3) {$t_3=(u_{A,X}\oplus u_{A,Y})\oplus (u_{B,X}\oplus u_{B,Y})$}
\end{picture}
\end{sideways}\]

\newpage
\[\begin{sideways}
\scriptsize\setlength\unitlength{0.5cm}
\begin{picture}(42,16)
\put(2, 16){$A (FX FY)\oplus   B (FX FY)$}\put(1.8,16.1){\line(0,-1){9}}\put(1,8.5){$id$}
                                                                      \put(1.8,7.1){\vector(1,0){0.8}} 

\put(2, 13){$(A FX) FY\oplus   (B FX) FY$}\put(8.4,13.1){\vector(1,0){7.3}}
                               \put(8.5,13.5){$(u_{A,X}\otimes   id)\oplus   (u_{B,X}\otimes   id)$}
                               
\put(2, 10){$(A FX\oplus   B FX) FY$}\put(7,10.1){\vector(1,0){4.8}}
                                \put(6.9,10.5){$(u_{A,X}\oplus   u_{B,X})\otimes   id$}

\put(2.5, 8.5){$((A \oplus   B) FX) FY$}

\put(3, 7){$(A \oplus   B) (FX FY)$}\put(8,7.1){\vector(1,0){8}}
                               \put(9.5,7.5){$u_{A\oplus   B,X}\otimes   id$}
                               
\put(2, 4){$(A \oplus   B) F(X Y)$}\put(6.2,4.1){\vector(1,0){24}}
                               \put(17,4.5){$u_{A\oplus   B,XY}$}
\put(2, 1){$A F(X Y) \oplus   BF(XY)$}\put(7.7,1.1){\vector(1,0){22.1}}
                               \put(16,1.5){$u_{A,XY}\oplus   u_{B,XY}$}

\put(12, 10){$((FX)A \oplus   (FX)B) FY$}\put(17.1,10.5){\vector(1,1){2.2}}\put(17.3,11.5){$id$}

\put(19.7, 10){$(FX)(AFY \oplus   BFY)$}\put(25.2,10.1){\vector(1,0){4.8}}
                                \put(25,10.5){$id\otimes   (u_{A,Y}\oplus   u_{B,Y})$}
                                
\put(21.8,10.5){\vector(-1,1){2.2}}\put(21,11.5){$\Breve{L}$}

\put(16, 13){$((FX)A)FY \oplus   ((FX)B)FY$}\put(23.4,13.1){\vector(1,0){6.6}}
                               \put(23.4,13.5){$(id\otimes   u_{A,Y})\oplus   (id\otimes   u_{B,Y})$}
                               
\put(17, 7){$(FX)(A \oplus   B)FY$}\put(21.7,7.1){\vector(1,0){8}}
                               \put(25,7.5){$id\otimes   u_{A\oplus   B,Y}$}
                               
                               \put(19,7.5){\vector(-1,1){2.2}}\put(15.7,8.5){$\Breve{L}\otimes   id$}
                               \put(19.5,7.5){\vector(1,1){2.2}}\put(21,8.5){$id$}

\put(30.3, 16){$(FX FY)A\oplus    (FX FY)B$}
\put(30.3, 13){$(FX) ((FY)A)\oplus   (FX) ((FY)B)$}
\put(30.3, 10){$(FX)((FY)A\oplus   (FY)B)$}
\put(30.3, 7){$(FX)((FY)(A \oplus   B))$}
\put(32, 5.7){$(FX FY)(A \oplus   B)$}

\put(30.3, 4){$F(XY)(A \oplus   B)$}
\put(30.3, 1){$(F(X Y))A \oplus   (F(XY))B$}

%%%%%%%%%%%%%%%%%%%%%
\put(5,15.8){\line(0,-1){2.2}}
\put(4.9,15.8){\line(0,-1){2.2}}

\put(5,12.8){\line(0,-1){2.2}}
\put(4.9,12.8){\line(0,-1){2.2}}

\put(5,9.8){\line(0,-1){0.7}}
\put(4.9,9.8){\line(0,-1){0.7}}

\put(5,8.4){\line(0,-1){0.7}}
\put(4.9,8.4){\line(0,-1){0.7}}

\put(5, 6.8){\vector(0,-1){2.2}}\put(3.2, 5.5){$id \otimes   \widetilde{F}$}

\put(5,3.8){\line(0,-1){2.2}}
\put(4.9,3.8){\line(0,-1){2.2}}

%%%%%%%%%%%

\put(33,15.8){\line(0,-1){2.2}}
\put(32.9,15.8){\line(0,-1){2.2}}

\put(33,10.5){\vector(0,1){2.2}}\put(29,11.5){$\Breve{L}^{FX}_{(FY)A,(FY)B}$}

\put(33,7.5){\vector(0,1){2.2}}\put(30,8.5){$id\otimes   \Breve{L}^{FY}_{A,B}$}

\put(33, 6.8){\line(0,-1){0.5}}\put(32.9, 6.8){\line(0,-1){0.5}}

\put(33,5.5){\vector(0,-1){1}}\put(33.2,5){$\widetilde{F}\otimes   id$}

\put(33,3.8){\vector(0,-1){2.2}}\put(33.2,2.5){$\Breve{L}^{F(XY)}_{A,B}$}

\put(1.8,16.1){\line(-1,0){2}}
\put(-0.2,16.1){\line(0,-1){15}}
\put(-0.2,1.1){\vector(1,0){2}}

\put(0.2, 5.5){$t_4$}

% \put(0,5){$id\otimes   \widetilde{F}$}
% \put(0.5,4){$\oplus  $}
% \put(0,3){$id\otimes   \widetilde{F}$}

\put(37,16.3){\line(1,0){4}}
\put(41,16.3){\line(0,-1){15.2}}
\put(41,1.1){\vector(-1,0){3.5}}

\put(40.1,3.2){$t_5$}

% \put(38.8,3.9){$\widetilde{F}\otimes   id$}
% \put(39.3,3.2){$\oplus  $}
% \put(38.8,2.5){$\widetilde{F}\otimes   id$}

\put(37,16){\line(1,0){2}}
\put(39,16){\line(0,-1){10.1}}\put(38.2,10){$\Breve{L}$}
\put(39,5.9){\vector(-1,0){2}}

\put(0.7, 11.5){(I)}\put(39.5, 11.5){(IX)}\put(34.8, 14.5){(X)}
\put(11, 11.5){(II)}\put(26, 11.5){(VII)}
\put(11, 8.5){(III)}\put(26, 8.5){(VIII)}
\put(18.7, 8.5){(IV)}
\put(18.5, 5.5){(V)}
\put(18.5, 2.5){(VI)}

\put(17,0){\text{Diagram (6)}}

\put(0, -1){{\emph where} \ $t_4=(id_A\otimes \widetilde{F}_{X,Y})\oplus (id_A\otimes \widetilde{F}_{X,Y})$}
\put(1.8,-2) {$t_5=(\widetilde{F}_{X,Y}\otimes id_A)\oplus (\widetilde{F}_{X,Y}\otimes id_B)$}

\end{picture}
\end{sideways}\]

\[\begin{sideways}
\scriptsize\setlength\unitlength{0.5cm}
\begin{picture}(35,22)
\put(3,22){$A((FX)B \oplus   (FY)B)$}
\put(3,19){$A(BFX \oplus   BFY)$}
\put(3,16){$A(B(FX \oplus   FY))$}
\put(3,13){$(AB)(FX \oplus   FY)$}
\put(3,10){$(AB)F(X\oplus   Y)$}
\put(3,7){$A(B(F(X\oplus   Y)))$}
\put(3,4){$A((F(X\oplus   Y))B)$}
\put(3,1){$A((FX\oplus   FY)B)$}

\put(13,22){$A((FX)B) \oplus   A((FY)B)$}
\put(13,19){$A(BFX) \oplus   A(BFY)$}
\put(13,13){$(AB)FX \oplus   (AB)FY$}
\put(13,10){$(F(X\oplus   Y))(AB)$}
\put(13,7){$(F(X\oplus   Y)A)B$}
\put(13,4){$(A(F(X\oplus   Y))B$}
\put(13,1){$(A(FX\oplus   FY))B$}

\put(24,22){$(AFX)B \oplus   (AFY)B$}
\put(24,19){$((FX)A)B \oplus   ((FY)A)B$}
\put(24,13){$(FX)(AB) \oplus   (FY)(AB)$}
\put(24,10){$(FX\oplus   FY)(AB)$}
\put(24,7){$((FX\oplus   FY)A)B$}
\put(24,4){$((FX)A\oplus   (FY)A)B$}
\put(24,1){$(AFX\oplus   AFY)B$}

\put(5.5,19.5){\vector(0,1){2.2}}\put(0,20.5){$id\otimes   (u_{B,X}\oplus  \ u_{B,Y})$}
\put(5.5,16.5){\vector(0,1){2.2}}\put(3.5,17.5){$id\otimes   \Breve{L}$}
\put(5.5,15.8){\line(0,-1){2.2}}\put(5.4,15.8){\line(0,-1){2.2}}
\put(5.5,12.8){\vector(0,-1){2.2}}\put(3,11.5){$id\otimes   \Breve{F}$}
\put(5.5,9.8){\line(0,-1){2.2}}\put(5.4,9.8){\line(0,-1){2.2}}
\put(5.5,6.8){\vector(0,-1){2.2}}\put(1.8,5.5){$id\otimes   u_{B,X\oplus   Y}$}
\put(5.5,1.5){\vector(0,1){2.2}}\put(1.8,2.5){$id\otimes   (\Breve{F}\otimes   id)$}

\put(15.7,19.5){\vector(0,1){2.2}}\put(10.3,20.5){$id\otimes   (u_{B,X}\oplus   u_{B,Y})$}
\put(15.7,13.5){\line(0,1){5.2}}\put(15.8,13.5){\line(0,1){5.2}}
\put(15.7,7.5){\line(0,1){2.2}}\put(15.8,7.5){\line(0,1){2.2}}
\put(15.7,4.5){\vector(0,1){2.2}}\put(11.5,5.5){$u_{A,X\oplus   Y}\otimes  \  id$}
\put(15.7,1.5){\vector(0,1){2.2}}\put(11.5,2.5){$ (id\otimes   \Breve{F})\otimes   \ id$}

\put(26.7,21.8){\vector(0,-1){2.2}}\put(19.5,20.5){$(u_{A,X}\otimes   id)\oplus  \  (u_{A,Y}\otimes   id)$}
\put(26.7,13.5){\line(0,1){5.2}}\put(26.8,13.5){\line(0,1){5.2}}
\put(26.7,10.5){\line(0,1){2.2}}\put(26.8,10.5){\line(0,1){2.2}}
\put(26.7,7.5){\line(0,1){2.2}}\put(26.8,7.5){\line(0,1){2.2}}
\put(26.7,4.5){\line(0,1){2.2}}\put(26.8,4.5){\line(0,1){2.2}}
\put(26.7,1.5){\vector(0,1){2.2}}\put(21,2.5){$(u_{A,X}\oplus   u_{A,Y} )\ \otimes   id$}

\put(8.6,22.1){\vector(1,0){4.2}}\put(10,22.3){$\Breve{L}$}
\put(7.6,19.1){\vector(1,0){5.2}}\put(10,19.3){$\Breve{L}$}
\put(7.6,13.1){\vector(1,0){5.2}}\put(10,13.3){$\Breve{L}$}
\put(7.6,10.1){\vector(1,0){5.2}}\put(9,10.5){$u_{AB,X\oplus   Y}$}
\put(7.6,4.1){\vector(1,0){5.2}}\put(9,4.3){$a=id$}
\put(7.6,1.1){\vector(1,0){5.2}}\put(9,1.3){$a=id$}

\put(19.5,22.1){\vector(1,0){4.2}}\put(20.5,22.4){$id\oplus   id$}
\put(18.5,13.1){\vector(1,0){5.2}}\put(19,13.5){$u_{AB,X}\oplus   u_{AB,Y}$}
\put(23.7,10.1){\vector(-1,0){5.9}}\put(20,10.4){$\Breve{F}\otimes   id$}
\put(23.7,7.1){\vector(-1,0){6}}\put(19.5,7.5){$(\Breve{F}\otimes   id)\otimes   id$}
\put(18,1.1){\vector(1,0){5.7}}\put(20,1.3){$\Breve{L}\otimes   id$}

\put(2.8,16.1){\line(-1,0){3}}
\put(-0.2,16.1){\line(0,-1){9}}
\put(-0.2,7.1){\vector(1,0){2.8}}
\put(0.2,14.5){$id\otimes  (id\otimes   \Breve{F})$}

\put(2.8,22.1){\line(-1,0){4}}
\put(-1.2,22.1){\line(0,-1){21}}
\put(-1.2,1.1){\vector(1,0){4}}
\put(-0.5,1.3){$id\otimes \Breve{R}$}

\put(29.5,4.1){\line(1,0){2.7}}
\put(32.2,4.1){\line(0,1){15}}
\put(32.2,19.1){\vector(-1,0){1.5}}
\put(30,16){$\Breve{R}\otimes id$}

\put(28.8,1.1){\line(1,0){4.2}}
\put(33,1.1){\line(0,1){21}}
\put(33,22.1){\vector(-1,0){3.2}}
\put(33.5,11.5){$\Breve{R}=id$}

\put(0.8,19){(I)}\put(1,11.5){(II)}
\put(8.5,20.5){(III)}\put(9.5,16){(IV)}\put(9.5,7){(VI)}\put(9.5,2.5){(VII)}
\put(15.5,11.5){(V)}
\put(20,16){(VIII)}\put(20,8.5){(IX)}\put(20,4){(X)}
\put(29.6,11.5){(XI)}
\put(30.6,20.5){(XII)}
\put(14,0){\text{Diagram (7)}}
\end{picture}
\end{sideways}\]

\newpage
%{\bf ACKNOWLEDGEMENTS.} This is a text of acknowledgements. 


\begin{thebibliography}{99} 

\bibitem{Car-Gar} P. Carrasco and A. R. Garz\'on, {\em Obstruction theory for extensions of categorical groups},  {Applied Categorical Structures}  {\bf 12} (2004),  35-61.

\bibitem{Fro-Wa}  A. Fr\"ohlich and C. T. C. Wall, {\em Graded monoidal categories}, {Compositio Mathematical} {\bf 28}, No 3 (1974),  229-285.


\bibitem{Gar-Del} A. R. Garz\'on and A. del R\'{i}o, {\em Equivariant extensions of categorical groups}, {Applied Categorical Structures}  {\bf 13} (2005), 131-140.

\bibitem{Gro} A. Grothendieck, {\em Cat\'{e}gories fibr\'{e}es et d\'{e}scente}, (SGA1) Expos\'{e} VI, {Lecture Notes in Mathematics} {\bf 224}, 145-194, Springer-Verlag, Berlin, 1971.

\bibitem {JS1}\label{tk2}
A. Joyal and R. Street,  {\em Braided tensor categories},   {Advances in Mathematics} {\bf 102}  (1993),  20-78.


\bibitem {Kas}
C. Kassel, {Quantum Groups}, {\em Graduate texts in mathematics}, Vol {\bf 155} (1995),  Springer-Verlag, Berlin/ New York.



\bibitem {Lap} M. L. Laplaza, {\em Coherence for distribution}, {Lecture Notes in Math} {\bf 281} (1972), 29-65.

\bibitem{Mac}S. Mac Lane, {\em Extensions and obstructions for rings}, {Illnois Journal of Mathematics} Vol {\bf 2}  (1958),  316-345.

\bibitem {Maj}
S. Majid, {\em Representations, duals and quantum doubles of monoidal categories}, {Suppl. Rend. Circ. Mat. Palermo}, Series II, {\bf 26} (1991), 197-206.

\bibitem {Maj2}
S. Majid, {\em Braided groups and  duals of monoidal categories}, {Canad. Math. Soc. Conf. Proc.} {\bf 13} (1992), 329-343.

\bibitem {Q1} N.  T.  Quang, {\em Introduction to Ann-categories}, {J. Math. Hanoi}, No.{\bf 15}, 4 (1987), 14-24. arXiv: 0702588v2 [math.CT] 21 Feb 2007.

\bibitem {Q2} N.  T.  Quang, {\em Structure of Ann-categories of Type $(R,N)$},  {Vietnam Journal of Mathematics} {\bf 32:4} (2004), 379-388.


\bibitem {Q3} N.  T.  Quang, {\em Structure of Ann-categories}, arXiv. 0805. 1505 v4 [math. CT] 20 Apr 2009.

\bibitem {Q4} N.  T.  Quang, D. D. Hanh, {\em On the braiding of an Ann-category}, arXiv: 0909.1486v1 [math.CT] 8 Sep 2009 [Asian-European Journal of Mathematics  (to appear)].

\bibitem {Q5} N.  T.  Quang, D. D. Hanh, {\em Homological classification of Ann-functors}, {East-West J. of Mathematics}, Vol {\bf 11}, No 2 (2009),  195-210.

\end{thebibliography}
\end{document}